\newtheorem{method}{Method}[section]
\newtheorem{theorem}{Theorem}[section]
\newtheorem{lemma}{Lemma}[section]
\newcommand{\xu}{{\rm x}}
\newcommand{\yu}{{\rm y}}
\newcommand{\uu}{{\rm u}}
\def\Re{{\rm Re}}
\def\Im{{\rm Im}}
\begin{document}
\cleardoublepage
\pagestyle{myheadings}
\bibliographystyle{plain}

\title{On Relaxed Filtered Krylov Subspace Method for \\
Non-Symmetric Eigenvalue Problems}
\author{
Cun-Qiang Miao
\thanks{Supported by The National Natural Science Foundation of China (No. 11901361), P.R. China.}
\\[1mm]
{\it School of Mathematics and Statistics}\\
{\it Central South University}\\
{\it Changsha 410083, P.R. China}\\
{\it Email: cqmiao@csu.edu.cn} \\[3mm]
Wen-Ting Wu
\thanks{Supported by Beijing Institute of Technology Research Fund Program for Young Scholars.}\\[1mm]
{\it School of Mathematics and Statistics} \\
{\it Beijing Institute of Technology} \\
{\it Beijing 100081, P.R. China} \\
{\it Email: wuwenting@bit.edu.cn}
}
\date{}
\markboth{\small C.-Q. Miao and W.-T. Wu}{\small On Relaxed Filtered Krylov Subspace Method}

\maketitle

\begin{abstract}
In this paper, by introducing a class of relaxed filtered Krylov subspaces, we propose the relaxed filtered Krylov subspace method for computing the eigenvalues with the largest real parts and the corresponding eigenvectors of non-symmetric matrices. As by-products, the generalizations of the filtered Krylov subspace method and the Chebyshev-Davidson method for solving non-symmetric eigenvalue problems are also presented. We give the convergence analysis of the complex Chebyshev polynomial, which plays a significant role in the polynomial acceleration technique. In addition, numerical experiments are carried out to show the robustness of the relaxed filtered Krylov subspace method and its great superiority over some state-of-the art iteration methods.
\end{abstract}

{\small{\bf Keywords:} \quad
eigenvalue, relaxed filtered Krylov subspace, Chebyshev polynomial, non-symmetric matrix.}

{\small{\bf AMS(MOS) Subject Classifications:}} \quad
65F15, 65N25.

\section{Introduction}
\label{secion-intr}
Denote by $\mathbb{R}^n$ and $\mathbb{R}^{n \times n}$ the $n$-dimensional real vector space and $n$-by-$n$ real matrix space, respectively. Analogously, we use $\mathbb{C}^n$ and $\mathbb{C}^{n \times n}$ to denote the corresponding complex vector and complex matrix spaces. We consider the following standard eigenvalue problem
\begin{align}
\label{equation-intr-EigeProb}
Ax=\lambda x, \quad \mbox{with} \quad \|x\|=1,
\end{align}
where $A\in\mathbb{R}^{n\times n}$ is a large, sparse and non-symmetric matrix, $(\lambda,x)$ with $\lambda\in\mathbb{C}$ and $x\in\mathbb{C}^{n}$ is an eigenpair of $A$, and $\|\cdot\|$ denotes the Euclidean norm of the corresponding vector or matrix. The eigenvalue problem (\ref{equation-intr-EigeProb}) is widely concerned in scientific and engineering computing, and in this paper we mainly aim to compute the eigenvalues with the largest real parts and their corresponding eigenvectors of non-symmetric matrix $A$. Several iteration methods for solving the symmetric (or non-symmetric) standard (or generalized) eigenvalue problem have been developed so far, such as gradient-type methods \cite{Knyazev-01,Ovtchinnikov-SIAM06,VecharynskiYP-15}, standard and rational Krylov subspace methods \cite{Parlett-98,Ruhe-98,Saad-11}, Davidson-type methods \cite{CrouzeixPS-94,Davidson-75,Morgan-92,SleijpenV-96,ZhouS-07}, and their variants \cite{BaiM-NA19,BaiW-20,Miao-JCAM18,Miao-NA19,Sorensen-92}, etc.

However, in the subsequent discussions, we only concern those iteration methods that integrate polynomial filtering techniques into the iteration process. Polynomial filtering technique for computing eigenvalues of symmetric or non-symmetric matrices is not a new idea, and it actually computes a vector with the form of $z=p(A)z_o$ by matrix-vector products, where $z_o$ is the current approximate eigenvector to the desired one, and $p(\cdot)$ is a polynomial function chosen to enhance the components of $z_o$ in the direction of desired eigenvectors while at the same time to damp those in the direction of undesired ones. Several acceleration ways have been developed so far.

In \cite{Saad-84}, Chebyshev polynomial filters are designed to accelerate the Arnoldi process for non-symmetric matrices, which is referred to as the {\em Arnoldi-Chebyshev} method. In this method, first Arnoldi procedure is performed to obtain some approximations to the desired eigenvector; then the initial vector for the Chebyshev iteration is computed, which is the current Ritz vector or a linear combination of the approximate eigenvectors obtained by the Arnoldi iteration; and, finally, the vector filtered by the Chebyshev iteration is used as a starting vector to restart the Arnoldi procedure. The block Arnoldi-Chebyshev method \cite{Sadkane-93} has been proposed to compute several eigenvalues with the largest real parts of the non-symmetric matrix.

The {\em filtered Lanczos} method proposed in \cite{BekasKS-08,FangS-12} is another application of polynomial filtering technique, in which the symmetric matrix $A$ is replaced by $p(A)$ to carry out the Lanczos procedure. More precisely, after $\ell$ steps of the Lanczos process with matrix $p(A)$ from an initial vector $v$, we obtain an orthonormal basis $V$ of the standard filtered Krylov subspace
\begin{align}
\label{equation-intr-FilKrySub}
\mathcal{K}_{\ell}(A,v)
=\text{span} \left\{v, \, p(A)v, \,p^{2}(A)v, \, \ldots, \, p^{\ell-1}(A)v \right\},
\end{align}
and, as a by-product, the projected matrix $T=V^Tp(A)V$ can be acquired without extra computations. The eigenvalues of matrix $T$ are computed to approximate the desired eigenvalues located in an interval $[\xi,\eta]$. The polynomial filter $p(\lambda)$ and another interval $[a, b]$, which contains the spectrum of $A$ (then $[\xi,\eta]\subset[a, b]$ holds), are prescribed in advance before the whole iteration process. A loose interval $[a, b]$ would influence and decrease the effectiveness of the polynomial filter, and the polynomial filter should be chosen such that the value $p(\lambda)$ in $[\xi,\eta]$ is extremely larger than that in $[a, b]\setminus[\xi,\eta]$. We should point out that this method fails to compute a specified eigenvalue, e.g., the smallest or the largest one, because $A$ and $p(A)$ do not share same eigenvalues. It indicates that we can not use the smallest (or the largest) eigenvalue of the projected matrix $T$ to approximate that of matrix $A$. However, this drawback can be resolved with a minor modification that projects matrix $A$ other than $p(A)$ onto the corresponding projection subspace. In the sequel, we will refer to the iteration process that projects matrix $A$ onto the filtered Krylov subspace in (\ref{equation-intr-FilKrySub}) as the filtered Krylov subspace method for both symmetric and non-symmetric eigenvalue problems.

Polynomial filtering technique is also efficient for accelerating convergence of Davidson method. In \cite{ZhouS-07}, Zhou and Saad proposed the {\em Chebyshev-Davidson} method for solving symmetric eigenvalue problems, in which the solution of the correction equation involved in the Davidson method was replaced by a Chebyshev polynomial filtered vector. However, to our best knowledge, the Chebyshev-Davidson method as well as the filtered Krylov subspace method has not appeared for solving non-symmetric eigenvalue problems so far.

Inspired by the existing polynomial filtering techniques, in this paper, we generalize the filtered Krylov subspace method and the Chebyshev-Davidson method, and propose a class of relaxed filtered Krylov subspace methods for non-symmetric eigenvalue problems. The relaxation of this method is twofold. On one hand, the polynomial filters are not fixed in the iteration process and vary from one Krylov to another. That is, the polynomial filters are adjusted dynamically step by step by utilizing the approximate eigenvalues obtained at each step, which can make the polynomial filtering process more effective and robust. On the other hand, a relaxed vector is inserted to guarantee a more accurate starting vector of the polynomial filtering process.

As we know, the Chebyshev polynomials show great superiority and robustness in polynomial acceleration techniques. In this paper, through exploring the convergence property of the complex Chebyshev polynomials, we also provide a simple but effective way to determine an ellipse, which affects the performance of the complex Chebyshev polynomial filters.

This paper is organized as follows. We present necessary preliminaries and notation in Section \ref{section-basics}. In Section \ref{section-rela}, we introduce a class of relaxed filtered Krylov subspaces and propose the corresponding relaxed filtered Krylov subspace method for solving non-symmetric eigenvalue problems. In Section \ref{section-cheb}, we present the convergence analysis and some further discussions of the complex Chebyshev iteration. In section \ref{section-nume}, some numerical experiments are executed to examine the competitiveness of our proposed method, i.e., the relaxed filtered Krylov subspace method, and, in the last section, we end this paper by some concluding remarks.

\section{Preliminaries and Notation}
\label{section-basics}

Throughout this paper, $\text{Re}(\lambda)$ and $\text{Im}(\lambda)$ represent the real and imaginary parts of a complex number $\lambda$, respectively. For a real or a complex number $\lambda$, its modulus is denoted by $|\lambda|$. $I_{n\times n}$ is used to denote the $n$-by-$n$ identity matrix, and without any confusions we just use $I_n$ or $I$ to simplify it. Notation $\mathcal{P}_m$ indicates the set of all polynomials with degree $m$. For the sake of convergence, matrix $A$ in this paper is assumed to be diagonalizable. The spectrum of $A\in\mathbb{R}^{n\times n}$ is denoted by $\{\lambda_i\}_{i=1}^n$ with decreasing order of their real parts, i.e., $\Re(\lambda_1)\geq\Re(\lambda_2)\geq\ldots\geq\Re(\lambda_n)$, and the corresponding unit eigenvectors are denoted by $\{x_i\}_{i=1}^n$.

Next, we give some basics of the Chebyshev polynomial filter which will be used later.

Assume that $x$ is the current approximate eigenvector and has an eigen-decomposition as $x=\mu_1x_1+\mu_2x_2+\cdots+\mu_nx_n$, then the polynomial filtered vector $z=p_m(A)x$ can also be expanded by the eigenbasis $\{x_i\}_{i=1}^n$ as
\begin{align}
\label{equation-cheb-FiltEigeDecom}
z=p_m(A)x=\mu_1p_m(\lambda_1)x_1+\sum\limits_{i=2}^n\mu_ip_m(\lambda_i)x_i.
\end{align}
In order to make $z$ be a good approximation to the desired eigenvector $x_1$, polynomial $p_m(\lambda)$ is often chosen such that the moduli of its values at $\{\lambda_i\}_{i=2}^n$, i.e., $|p_m(\lambda_i)|$, $i=2, 3, \ldots, n$, are much smaller than that at $\lambda_1$. A normalization condition $p_m(\lambda_1)=1$
imposed on the polynomial $p_m(\lambda)$ essentially leads us to find polynomials with their moduli being small on a discrete set $\{\lambda_2,\lambda_3,\ldots,\lambda_n\}$. However, without the knowledge of all eigenvalues of matrix $A$, it is almost impossible for us to find this class of polynomials. Thus, as an alternative, we attempt to replace the discrete set $\{\lambda_2,\lambda_3,\ldots,\lambda_n\}$ by a continuous domain $E$ which contains the discrete set $\{\lambda_2,\lambda_3,\ldots,\lambda_n\}$ but excludes $\lambda_1$. That is, we need to seek a polynomial $p_m(\lambda)$ which achieves the minimum
\begin{align*}
\min\limits_{p\in \mathcal{P}_{m},p(\lambda_1)=1}\max\limits_{\lambda\in E}|p(\lambda)|.
\end{align*}

As we know, the spectrum of the non-symmetric matrix $A\in\mathbb{R}^{n\times n}$ is symmetric with respect to the real axis, therefore, it is favourable to restrict domain $E$ to an ellipse $E(d,c,a)$, with real center $d$, focuses $d+c$ and $d-c$ and major semiaxis $a$, which is symmetric with respect to the real axis. Then, the required polynomial can be achieved by
\begin{align}
\label{equation-cheb-ChebPoly}
p_m(\lambda)=\frac{T_m[(\lambda-d)/c]}{T_m[(\lambda_1-d)/c]},
\end{align}
where $T_m(z)=\frac{1}{2}\left(w^m+w^{-m}\right)$, with $w$ being the modulus largest number which satisfies $z=\frac{1}{2}\left(w+w^{-1}\right)$, is the complex Chebyshev polynomial with degree $m$ of the first kind. Note that, by technical modifications, the polynomial filter in (\ref{equation-cheb-ChebPoly}) benefits from the three-term recurrence relation
\begin{align*}
p_{m+1}(\lambda)=2\sigma_{m+1}\frac{\lambda-d}{c}p_m(\lambda)
-\sigma_m\sigma_{m+1}p_{m-1}(\lambda),
\end{align*}
with $\sigma_{m+1}=\frac{1}{2/\sigma_1-\sigma_m}$, $m=1,2,\ldots$, and $\sigma_1=\frac{c}{\lambda_1-d}$.

For convenience, we present an algorithmic description of the Chebyshev iteration as follows. For more details about the Chebyshev iteration, we refer to \cite{Saad-84,Saad-11} and the references therein.

\begin{method}{\bf(Chebyshev Iteration)}
\label{algorithm-cheb-ChebIter}
{\sf
\begin{enumerate}
\item {\bf Start:}
Given the non-symmetric matrix $A$, the starting vector $z_0$, and the constants $d$, $c$ and $\lambda_1$; Compute $\sigma_1=\frac{c}{\lambda_1-d}$ and $z_1=\frac{\sigma_1}{c}(A-dI)z_0$.

\item {\bf Iterate:}
For $k=1,2,\ldots$, until convergence do:
\begin{enumerate}
\item
compute $\sigma_{k+1}=\frac{1}{2/\sigma_1-\sigma_k}$;
\item
compute $z_{k+1}=2\frac{\sigma_{k+1}}{c}(A-dI)z_k-\sigma_k\sigma_{k+1}z_{k-1}$.
\end{enumerate}
\end{enumerate}
}
\end{method}

Note that although $a$ and $c$ are either real or purely imaginary numbers, the recursion in the above method can still be carried out in real arithmetic, because the scalars $\frac{\sigma_{k+1}}{c}$ and $\sigma_k\sigma_{k+1}$ are real numbers if $\lambda_1$ and $z_0$ are both real.

\section{The Relaxed Filtered Krylov Subspace Method}
\label{section-rela}
In this section, we aim to generalize the filtered Lanczos method for computing the eigenvalues located in a prescribed interval of symmetric matrices to a more general form for computing specified eigenvalues, e.g., the eigenvalues with the algebraically largest real parts, of non-symmetric matrices.

As afore-mentioned, in the filtered Lanczos method, the polynomial filter is prescribed in advance and fixed in the whole iteration process, thus an elaborate interval containing the spectrum of matrix $A$ should be provided to make the polynomial filter very effective, which is cumbersome and may be difficult in some circumstances. In fact, one way to improve the filtered Lanczos method is to vary the involved polynomial filters from one Krylov iteration to another, so that we can adequately utilize the currently obtained approximate eigenvalues. Moreover, since the starting vector for the polynomial filtered process also plays an important role, another reasonable strategy to improve the filtered Lanczos method is to choose the starting vector for the polynomial filtered process as a linear combination of the orthonormal vectors, which span the current projection subspace.

Therefore, with the starting unit vector $v_1$, to extract the desired eigenvalues and the corresponding eigenvectors, we project matrix $A$ onto the following relaxed filtered Krylov subspace
\begin{align}
\label{equation-rela-RelaFilKrySub}
\mathcal{K}_{\ell}^{(R)}(A,v_1)
=\text{span} \left\{v_1, \, p^{(1)}(A)V_1s_1, \,
p^{(2)}(A)V_2s_2, \, \ldots, \, p^{(\ell-1)}(A)V_{\ell-1}s_{\ell-1} \right\},~\mbox{for}~\ell \geq 2,
\end{align}
where for $j\in\{1,2,\ldots,\ell-1\}$, $V_j$ is a matrix with orthonormal columns spanning the $j$-th projection subspace $\mathcal{K}_{j}^{(R)}(A,v_1)$, $s_j\in\mathbb{R}^j$ is a prescribed vector, and $p^{(j)}(\lambda)=a_0+a_1\lambda+\cdots
+a_{m_j}\lambda^{m_j}$ is a constructed polynomial filter with its degree $m_j$ larger than zero. Obviously, if all polynomials $p^{(j)}(\lambda)$, $j=1,2,\ldots,\ell-1$, are fixed as a same polynomial $p(\lambda)$, and $s_j$, $j\in\{1,2,\ldots,\ell-1\}$, is chosen to be $s_j=(0,\ldots,0,1)^T\in\mathbb{R}^j$, the relaxed filtered Krylov subspace in (\ref{equation-rela-RelaFilKrySub}) will degenerate into the standard filtered Krylov subspace in (\ref{equation-intr-FilKrySub}).

Based on the relaxed filtered Krylov subspace in (\ref{equation-rela-RelaFilKrySub}), we propose the relaxed filtered Krylov subspace method for computing the eigenvalues with the largest real parts, which is summarized algorithmically as follows. We remark that with minor modifications, the proposed method can also be suitable for computing the eigenvalues with the smallest real parts.
\begin{method}{\bf (The Relaxed Filtered Krylov Subspace Method)}
\label{method-rela-RelFilKrySub}
{\sf
\begin{enumerate}
\item {\bf Start:}
Choose a unit approximate eigenvector $x^{(1)}\in\mathbb{R}^{n}$ and set $v_1=x^{(1)}$; Compute $w_1=Av_1$ and $H_1=v_1^Tw_1$, respectively; Set $V_1=[v_1]$, $W_1=[w_1]$, $s_1=1$ and $p^{(1)}(\lambda)=\lambda$; Let $\varepsilon$ be the stopping tolerance and $n_r$ be the restart number.

\item {\bf Iterate:}
For $k=1,2,\ldots$, do:
\begin{enumerate}

\item
compute the polynomial filtered vector $z=p^{(k)}(A)V_ks_k$, orthonormalize $z$ against $V_k$ via modified Gram-Schmidt process to obtain $v_{k+1}$, i.e., $v_{k+1}=\frac{(I-V_kV_k^T)z}{\|(I-V_kV_k^T)z\|}$, and set $V_{k+1}=[V_k,v_{k+1}]$;

\item
compute $w_{k+1}=Av_{k+1}$, $W_{k+1}=[W_k,w_{k+1}]$, $H_{k,u}=V_k^Tw_{k+1}$, $H_{k,l}=v_{k+1}^TW_k$ and $h_{k+1,k+1}=v_{k+1}^Tw_{k+1}$, respectively;

\item
form the projected matrix
$H_{k+1}=
\left(
  \begin{array}{cc}
    H_k & H_{k,u} \\
   H_{k,l} & h_{k+1,k+1}
  \end{array}
\right);$

\item
compute eigenpairs $(\theta_i,y_i)$ of the projected matrix $H_{k+1}$, with descending order $\text{Re}(\theta_1)\geq \text{Re}(\theta_2)\geq\ldots\geq \text{Re}(\theta_{k+1})$, that is, $H_{k+1}\, y_i=\theta_i y_i$ with $\|y_i\|=1$, $i=1,2,\ldots,k+1$;

\item
compute the corresponding Ritz vector $x^{(k+1)}=V_{k+1}\, y_1$, compute the residual $r^{(k+1)}=W_{k+1}y_1-\theta_1 x^{(k+1)}$, and test for convergence: if $\|r^{(k+1)}\|\leq\varepsilon$ is satisfied, then stop;

\item
if $k=n_r-1$, then goto Step 3 to restart;
\item
use the obtained approximations to construct an appropriate polynomial filter $p^{(k+1)}(\lambda)$;

\item
choose a proper vector $s_{k+1}\in\mathbb{R}^{k+1}$, and set $k:=k+1$.
\end{enumerate}
\item {\bf Restart:}
Let $v_1=x^{(n_r)}$, compute $w_1=Av_1$ and $H_1=v_1^Tw_1$, set $V_1=[v_1]$, $W_1=[w_1]$, $s_1=1$ and $p^{(1)}(\lambda)=\lambda$, and goto Step 2.
\end{enumerate}
}
\end{method}
\vspace{7pt}

In the following, we make some detailed comments and implementations on Method \ref{method-rela-RelFilKrySub} for solving non-symmetric eigenvalue problems.

Steps {\sf 2.(a)-(d)} are called the Rayleigh-Ritz procedure for projecting matrix $A$ onto the projection subspace in (\ref{equation-rela-RelaFilKrySub}). In this process, we first construct an orthonormal basis $V_{k+1}$ for the projection subspace; then we project matrix $A$ onto the corresponding subspace and form the associated projected matrix $H_{k+1}=(V_{k+1})^TAV_{k+1}$; and, finally, we use the eigenvalues with the largest real parts of the projected matrix to approximate the desired ones.

For the orthogonalization process in Step {\sf 2.(a)}, we use the modified Gram-Schmidt method. As the iteration proceeds, the orthogonalization of $v_{k+1}$ to $V_k$ may be lost, and at this time the reorthogonalization process is recommended.

Generally speaking, when the dimension of the projection subspace is large enough, which would result in large storage and high computational costs, we need to adopt a restart strategy. Step 3 provides us one common way that takes the last obtained approximate eigenvector to restart. We remark that this may not be the most efficient way for restarting, as we may discard possibly valuable information contained in the projection subspace. Thereby, a combination of the current Ritz vectors also provides us an alternative strategy for restarting.

In Step {\sf 2.(g)}, the selection of proper polynomial filters is one of the most critical ingredients of the relaxed filtered Krylov subspace method. For symmetric eigenvalue problems, many polynomial filters or rational function approximations to step functions are available, see, e.g., \cite{Anderson-10,FangS-12,Miao-EAJAM17,XiS-16} and the references therein. Unfortunately, to the best of our knowledge, for non-symmetric eigenvalue problems rare polynomial filters or rational function approximations have been developed to accelerate the convergence. Thus, in this paper we mainly focus on the Chebyshve polynomial filters and the detailed discussions will be given in the next section.

The relaxed filtered Krylov subspace method may consume more computational costs than the filtered Krylov subspace method for extra computations of $s_k$ in Step {\sf 2.(h)} and $V_ks_k$, but the reduction in total number of iteration steps will bring us a great advantage in terms of total computational cost and computing time. The ways for selecting the vector $s_k$ in Method \ref{method-rela-RelFilKrySub} include but are not limited to those in the following.

First, as the simplest means, vector $s_k$ can be determined by $s_k=(0,\ldots,0,1)^T\in\mathbb{R}^{k}$. For this selection, the starting vector $V_ks_k$ of the polynomial filtered process is the last obtained basis vector $v_k$, then Method \ref{method-rela-RelFilKrySub} is reduced to the filtered Lanczos method with minor modifications if the target matrix $A$ is symmetric and all polynomials are fixed to a same one. Based on this selection, we can construct the corresponding filtered Krylov subspace method for solving non-symmetric eigenvalue problems.

Second, as an intuitive generalization of the first choice of vector $s_k$, it can be chosen as a weight vector, that is, $s_k=(\alpha_1,\alpha_2,\ldots,\alpha_k)^T$ with $\alpha_i\geq 0$, $i=1, 2, \ldots, k$, and $\sum\limits_{i=1}^{k}\alpha_i=1$. As the latter basis vectors in $V_k$, e.g., $v_k, v_{k-1}, v_{k-2}, \ldots$, may contain more valuable approximate information than the former ones, e.g., $v_1, v_2, v_3, \ldots$, the last $\alpha_i$'s can be chosen to be appropriately larger than the former ones.

Third, the relaxed filtered Krylov subspace method will degenerate to a polynomial filtered Davidson method if the starting vector $V_ks_k$ of the polynomial filtered process is chosen to be the currently obtained Ritz vector $x^{(k)}$. That is, vector $s_k$ can be prescribed to be the eigenvector $y_1$ corresponding to the eigenvalue $\theta_1$ whose real part is the largest among all the eigenvalues of the projected matrix $H_{k}$. In this case, the relaxed filtered Krylov subspace method can be regarded as a generalization for the Chebyshev-Davidson method \cite{Zhou-10,ZhouS-07} from symmetric to non-symmetric eigenvalue problems.

Finally, we can expect $V_ks_k$, which is chosen from the current projection subspace spanned by $V_k$, to minimize the residual vector associated with the desired eigenvalue $\lambda_1$, that is,
\begin{align}
\label{equation-rela-RelVecS}
s_k=\mathop{\arg\min}_{s\in\mathbb{R}^{k},\|s\|=1}\|(A-\lambda_1I)V_ks\|.
\end{align}
For the unknown of the desired eigenvalue $\lambda_1$, we often replace it with an approximation, say, the Ritz value $\theta_1$ at the $k$-th step. In \cite{Jia-00}, Jia referred to the vector $s_k$ obtained in this way as a refined Ritz vector. Obviously, $s_k$ is the right singular vector corresponding to the smallest singular value of matrix $\widehat{W}_k=(A-\theta_1I)V_k$. Thus, we can directly form $\widehat{H}_k=V_k^T(A-\theta_1I)^T(A-\theta_1I)V_k
=W_k^TW_k-\theta_1W_k^TV_k-\theta_1V_k^TW_k+\theta_1^2I$ to gain the right singular vector, or make a QR factorization $(A-\theta_1 I)V_k=QR$ and compute the singular vectors of the small triangular matrix $R$. The process for computing a singular vector looks cumbersome, but, in practice, the total extra cost is negligible compared to the whole iteration cost. We remark that the matrix $\widehat{H}_k$ can be updated just by updating $W_k^TW_k$ and $W_k^TV_k$ step by step as $V_k^TW_k=H_k$.

\section{Chebyshev Filtered Iteration}
\label{section-cheb}
As we know, the Chebyshev polynomial is effective and economical when employed to solve large symmetric or non-symmetric eigenvalue problems, because it can be used to highly amplify the desired components and filter out the undesired directions, and at the same time, it avoids matrix factorizations in shift-and-invert technique by elegantly performing matrix-vector products.

This section is devoted to implementing the polynomial filtered process with Chebyshev polynomials and providing convergence properties of the Chebyshev iteration.

\subsection{Convergence}
To explore the convergence of the Chebyshev iteration, it can be seen from the equality in (\ref{equation-cheb-FiltEigeDecom}) that we need to give an analysis for the quantity $p_m(\lambda_i)$ in front of $x_i$, $i=2, 3, \ldots, n$. According to the definition of the complex Chebyshev polynomial in (\ref{equation-cheb-ChebPoly}), $p_m(\lambda_i)$ can be rewritten as
\begin{align}
\label{equation-cheb-ChebPoly-i}
p_m(\lambda_i)=\frac{w_i^m+w_i^{-m}}{w_1^m+w_1^{-m}},
\end{align}
where $w_i$ is the modulus largest root of the equation
\begin{align}
\label{equation-cheb-RootEqu}
\frac{1}{2}\left(w+w^{-1}\right)=\frac{\lambda_i-d}{c}
\end{align}
of the variable $w$, $i=1, 2, \ldots, n$. By straightforward computations, we can see that $w_i$ is the modulus larger one of
\begin{align*}
w_i^+=\frac{\lambda_i-d}{c}
+\sqrt{\left(\frac{\lambda_i-d}{c}\right)^2-1}
\quad\text{and}\quad
w_i^-=\frac{\lambda_i-d}{c}-\sqrt{\left(\frac{\lambda_i-d}{c}\right)^2-1}.
\end{align*}
In fact, $w_i^+$ and $w_i^-$ are the inverses of each other, i.e., $w_i^+w_i^-=1$. Thus, we have
\begin{align*}
p_m(\lambda_i)
=\frac{\left(w_i^+\right)^m+\left(w_i^+\right)^{-m}}{w_1^m+w_1^{-m}}
=\frac{\left(w_i^-\right)^m+\left(w_i^-\right)^{-m}}{w_1^m+w_1^{-m}},
~i=2, 3, \ldots, n,
\end{align*}
which indicates that choosing $w_i^+$ or $w_i^-$ for $w_i$ in expression (\ref{equation-cheb-ChebPoly-i}) makes no difference on the value of $p_m(\lambda_i)$.

For any $z\in\mathbb{C}$, we use $\sqrt{z}$ to denote its {\em arithmetic} square root, i.e., the one of its square roots with positive real part, or with zero real part and non-negative imaginary part. This definition is in accordance with the arithmetic square root of a non-negative real number. Besides, this definition for a complex number makes no difference on the value of $w_i, i=1, 2, \ldots, n$, and then makes no difference on the value of $p_m(\lambda_i), i=1, 2, \ldots, n$. In fact, if $w_i^+$ ($w_i^-$) corresponds to the modulus larger root $w_i$ of equation (\ref{equation-cheb-RootEqu}) under the above definition, then when $\sqrt{z}$ denotes the other square root of $z\in\mathbb{C}$, i.e., the one with negative real part, or with zero real part and non-positive imaginary part, the modulus larger root $w_i$ of the equation (\ref{equation-cheb-RootEqu}) will be $w_i^-$ ($w_i^+$), and no matter which definition of $\sqrt{z}$ for $z\in\mathbb{C}$ is chosen, the actual value of $w_i$ does not change. This is the reason why we use $\sqrt{z}$ to denote its arithmetic square root.

Under the definition of $\sqrt{z}$ for any $z\in\mathbb{C}$ as above, we present the subsequent lemma
whose proof is left to Section \ref{sect-append}.
\begin{lemma}
\label{lemma-cheb-inequalities}
For any complex number $z\in\mathbb{C}$, the following two assertions hold true:
\begin{enumerate}
\item[(i)]
if the real and imaginary parts of $z$ satisfy $\Re(z)>0$, or satisfy $\Re(z)=0$ and $\Im(z) \geq 0$, then
\begin{align*}
\left|z-\sqrt{z^2-1}\right|
\leq \left||z|+\sqrt{|z|^2-1}\right|
\leq \left|z+\sqrt{z^2-1}\right|
\leq |z|+\sqrt{|z|^2+1};
\end{align*}
\item[(ii)]
if the real and imaginary parts of $z$ satisfy $\Re(z)<0$, or satisfy $\Re(z)=0$ and $\Im(z) \leq 0$, then
\begin{align*}
\left|z+\sqrt{z^2-1}\right|
\leq \left||z|+\sqrt{|z|^2-1}\right|
\leq \left|z-\sqrt{z^2-1}\right|
\leq |z|+\sqrt{|z|^2+1}.
\end{align*}
\end{enumerate}
\end{lemma}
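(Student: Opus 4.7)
Write $w:=\sqrt{z^2-1}$ (the arithmetic root) and set $\alpha:=z+w$, $\beta:=z-w$. The proof rests on the two identities
\[
\alpha+\beta \,=\, 2z, \qquad \alpha\beta \,=\, z^2-(z^2-1) \,=\, 1,
\]
so that $|\alpha||\beta|=1$, together with a compact algebraic device pinning down which of $|\alpha|,|\beta|$ is larger. Under the arithmetic convention, $\sqrt{|z|^2-1}$ is either a non-negative real (when $|z|\ge 1$) or equals $i\sqrt{1-|z|^2}$ (when $|z|<1$), so the middle quantity
\[
M_0 \,:=\, \bigl||z|+\sqrt{|z|^2-1}\bigr| \,=\, \begin{cases}|z|+\sqrt{|z|^2-1}, & |z|\ge 1,\\[2pt] 1, & |z|<1,\end{cases}
\]
satisfies $M_0\ge 1$ in every case.

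To place $|\alpha|,|\beta|$ around $M_0$, let $M:=\max(|\alpha|,|\beta|)\ge 1$, so $\min(|\alpha|,|\beta|)=1/M$. The triangle inequality on $\alpha+\beta=2z$ yields $2|z|\le M+1/M$, i.e., $M^2-2|z|M+1\ge 0$. When $|z|\ge 1$ the alternative root $M\le |z|-\sqrt{|z|^2-1}\le 1$ is excluded by $M\ge 1$, forcing $M\ge |z|+\sqrt{|z|^2-1}=M_0$; when $|z|<1$ the quadratic is automatically non-negative and the bound $M\ge 1=M_0$ is trivial. Dually, $\min(|\alpha|,|\beta|)=1/M\le 1\le M_0$. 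The outer bound is also the triangle inequality: $|z\pm w|\le |z|+|w|=|z|+\sqrt{|z^2-1|}\le |z|+\sqrt{|z|^2+1}$, using $|z^2-1|\le |z|^2+1$.

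It remains to identify which of $|\alpha|,|\beta|$ equals $M$, which is the main obstacle. A direct expansion gives $|\alpha|^2-|\beta|^2=4\Re(z\bar w)$, so the task reduces to showing $\Re(z\bar w)\ge 0$ in case (i) and $\le 0$ in case (ii). Writing $z=a+bi$ and $w=p+qi$, the arithmetic convention forces $p\ge 0$ (with $q\ge 0$ when $p=0$), while $w^2=z^2-1$ yields the coupling $pq=ab$. The short identity
\[
a\,\Re(z\bar w)\,=\, a(ap+bq)\,=\, a^2p+(pq)\,q\,=\, p(a^2+q^2)\,\ge\, 0
\]
then shows $\text{sign}\,\Re(z\bar w)=\text{sign}\,a$ whenever $a\ne 0$, settling the cases with $\Re z\ne 0$. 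The residual case $a=0$ is immediate: $z=bi$ gives $z^2-1<0$, hence $w=i\sqrt{b^2+1}$ and $\Re(z\bar w)=b\sqrt{b^2+1}$, whose sign matches that of $\Im z$. Chaining with the previous step then produces both inequality chains. A more topological approach using the branch cut of $\sqrt{z^2-1}$ along $[-1,1]\cup i\mathbb{R}$ also works but is cumbersome; the identity $a(ap+bq)=p(a^2+q^2)$, a consequence of $pq=ab$, is the compact device that bypasses those complications cleanly.
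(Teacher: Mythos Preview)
Your argument is correct and genuinely different from the paper's. The paper proceeds by dividing through by $|z|$, passing to the reciprocal variable $\tilde z=1/z$, writing everything in polar form, and then grinding through a trigonometric case analysis (splitting on $|\tilde z|\ge 1$ versus $|\tilde z|<1$) to compare $\sqrt{1+|1-\tilde z^2|\pm 2\sqrt{|1-\tilde z^2|}\cos\tilde\eta}$ against the middle term; this takes a couple of pages of inequality manipulation. Your route is shorter and more structural: the product relation $\alpha\beta=1$ immediately gives $\min(|\alpha|,|\beta|)\le 1\le M_0$, and the triangle inequality on $\alpha+\beta=2z$ turns the remaining bound into the quadratic $M^2-2|z|M+1\ge 0$, which yields $M\ge M_0$ in one line. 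The only nontrivial point left is deciding which of $|\alpha|,|\beta|$ is the larger, and your identity $a(ap+bq)=p(a^2+q^2)$, obtained from the imaginary part of $w^2=z^2-1$, is a clean device for that—considerably tidier than the paper's angle computations. One minor remark: in the borderline case $|z|=1$ the two roots of the quadratic coincide at $1$, so the phrase ``is excluded by $M\ge 1$'' is not literally an exclusion there; but since $M_0=1$ as well, the desired bound $M\ge M_0$ is immediate anyway, and nothing is lost.
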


Based on the above discussions, we know that when the real and imaginary parts of $\frac{\lambda_i-d}{c}$ satisfy $\Re(\frac{\lambda_i-d}{c})>0$, or satisfy $\Re(\frac{\lambda_i-d}{c})=0$ and $\Im(\frac{\lambda_i-d}{c})\geq0$, the modulus largest root $w_i$ can be prescribed with $w_i=w_i^+$; when the real and imaginary parts of $\frac{\lambda_i-d}{c}$ satisfy $\Re(\frac{\lambda_i-d}{c})<0$, or satisfy $\Re(\frac{\lambda_i-d}{c})=0$ and $\Im(\frac{\lambda_i-d}{c}) \leq 0$, the modulus largest root $w_i$ can be prescribed with $w_i=w_i^-$, $i=1, 2, \ldots, n$. In addition, $|w_i|$, $i=1, 2, \ldots, n$, can be bounded by the following inequalities
\begin{align}
\label{equation-cheb-OmegaBound}
\left|\left|\frac{\lambda_i-d}{c}\right|
+\sqrt{\left|\frac{\lambda_i-d}{c}\right|^2-1}\right|
\leq |w_i|
\leq \left|\frac{\lambda_i-d}{c}\right|
+\sqrt{\left|\frac{\lambda_i-d}{c}\right|^2+1}.
\end{align}

Next, we give the analysis of the upper bound of $\left|p_m(\lambda_i)\right|,
i=2, 3, \ldots, n$. It follows from (\ref{equation-cheb-ChebPoly-i}) that
\begin{align*}
p_m(\lambda_i)=\frac{w_i^m+w_i^{-m}}{w_1^m+w_1^{-m}}
=\frac{w_i^m}{w_1^m}\cdot\frac{1+w_i^{-2m}}{1+w_1^{-2m}}.
\end{align*}
Since $w_i^+w_i^-=1$ and $w_i$ is the modulus larger one of $w_i^+$ and $w_i^-$, we have $|w_i|\geq 1, i=1, 2, \ldots, n$. Then, it holds that $\frac{1+w_i^{-2m}}{1+w_1^{-2m}}\rightarrow 1$ as $m\rightarrow\infty$ if $|w_i|\neq 1$ and $|w_1|\neq 1$, and $\frac{1+w_i^{-2m}}{1+w_1^{-2m}}$ is bounded if $|w_i|=1$ or $|w_1|=1$, thus, we define ${\kappa}_i=\left|\frac{w_i}{w_1}\right|$ as the damping coefficient \cite{Saad-84} of the eigenvalue $\lambda_i$, $i=2, 3, \ldots, n$. The convergence of the Chebyshev iteration is determined by the maximum damping coefficient $\kappa$, i.e., ${\kappa}=\max\limits_{2\leq i\leq n}\{\kappa_i\}$.

As we know, the eigenvalues of non-symmetric matrices are real or conjugate complex numbers, that is, all the eigenvalues are symmetric with respect to real axis. Therefore, for the ellipse $E(d, c, a)$ determined by the Chebyshev iteration, which contains the undesired eigenvalues $\{\lambda_i\}_{i=2}^n$ but excludes the desired eigenvalue $\lambda_1$, the major axis is either on real axis or parallel to imaginary axis; see Figure \ref{fig:ellipse-fat-thin}. For convenience, we refer to the ellipse whose major axis is on real axis as a ``fat" ellipse, while the ellipse whose major axis is parallel to imaginary axis as a ``thin" ellipse.

\begin{figure}[ht]
\centering \subfigure{
\begin{minipage}[b]{0.45\textwidth}
\centering
\includegraphics[scale=0.45]{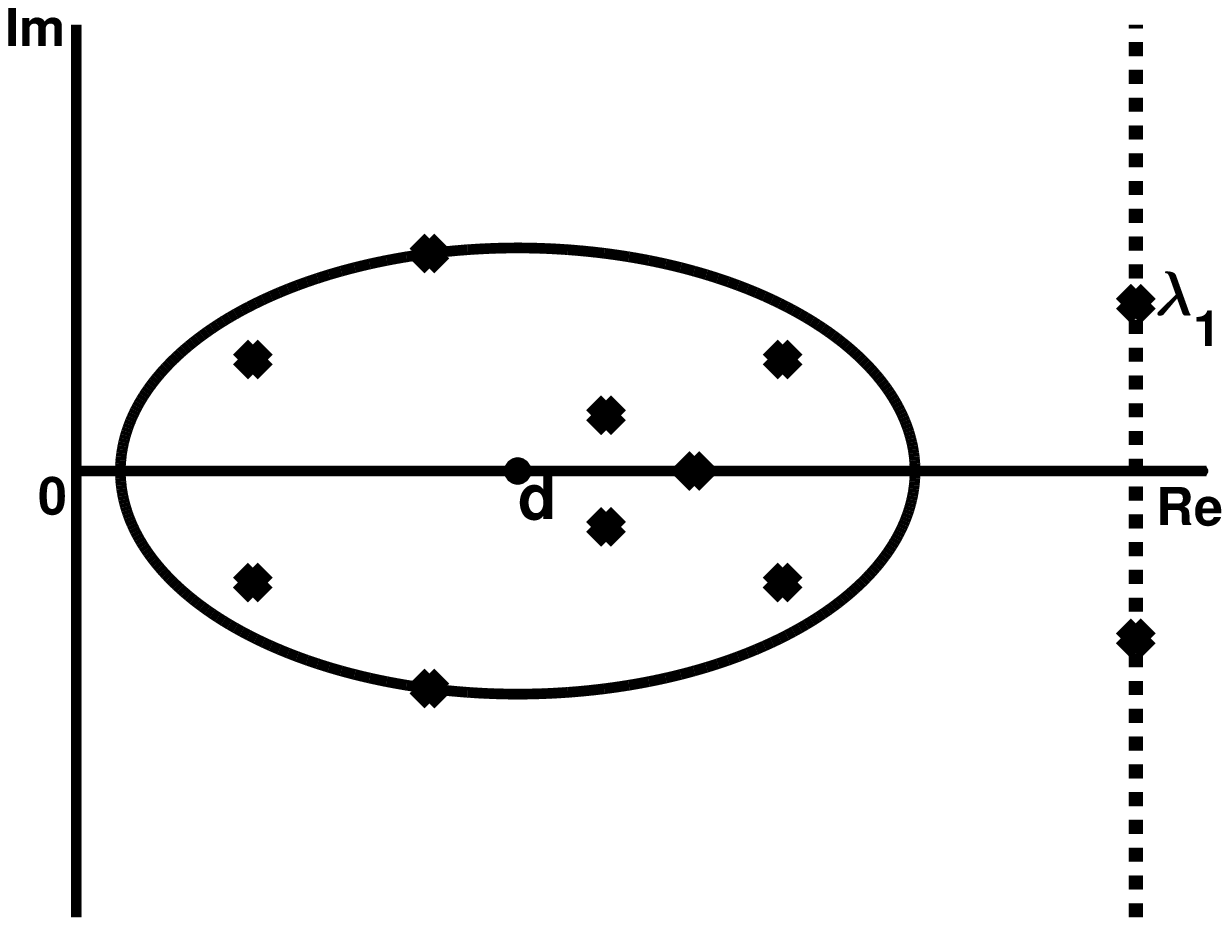}
\end{minipage}
}
\centering \subfigure{
\begin{minipage}[b]{0.45\textwidth}
\centering
\includegraphics[scale=0.45]{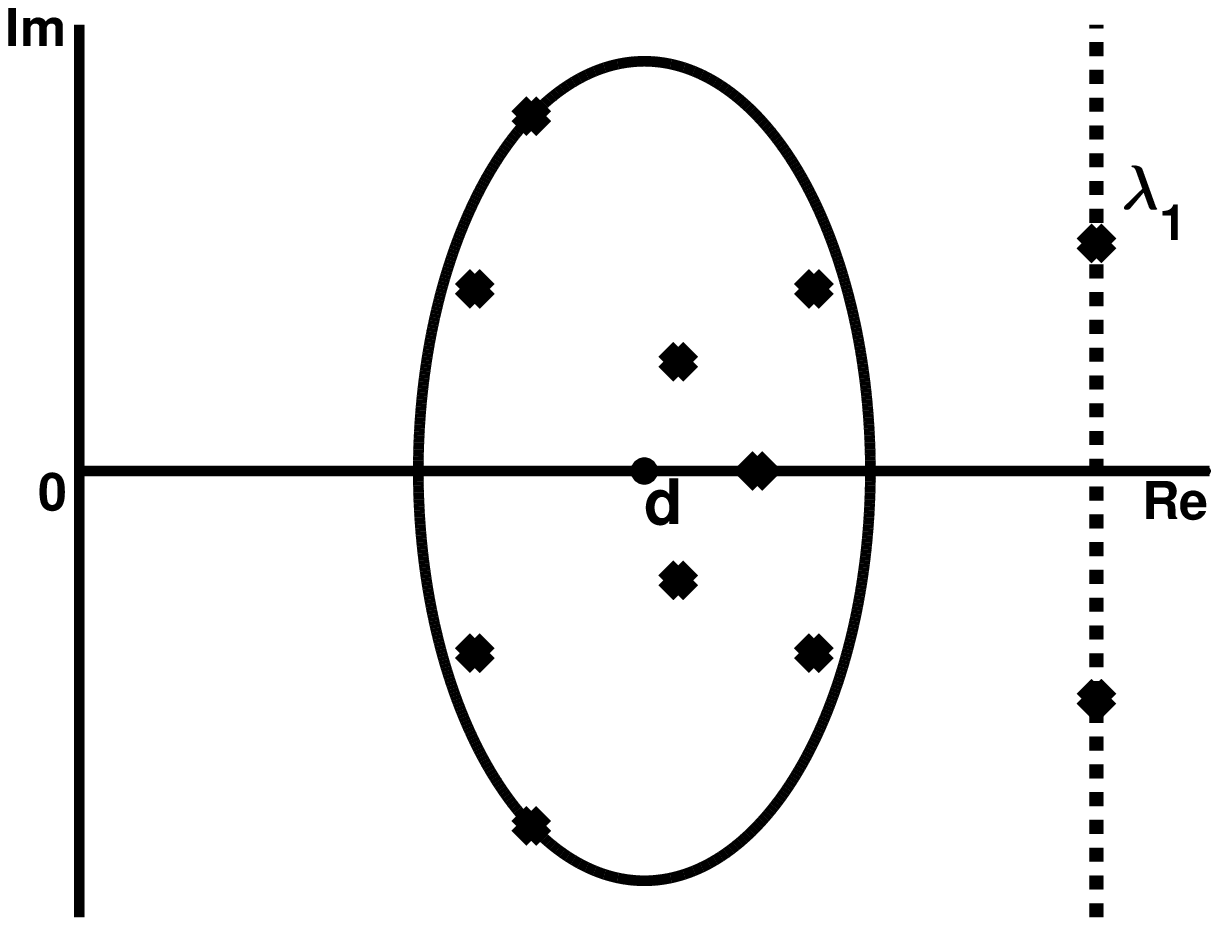}
\end{minipage}
}
\caption{Pictures of fat (left) and thin (right) ellipses
determined by Chebyshev iteration.}
\label{fig:ellipse-fat-thin}
\end{figure}

Now, we can establish convergence of the Chebyshev iteration.
\begin{theorem}
\label{theorem-cheb-Conv}
If the ellipse $E(d, c, a)$ determined by the Chebyshev iteration, with $d$ being the ellipse center, $d-c$ and $d+c$ being the foci, and $a$ being the major semiaxis, contains the undesired eigenvalues $\{\lambda_i\}_{i=2}^n$, then the maximum damping coefficient ${\kappa}$ generated by the Chebyshev iteration satisfies
\begin{align}
\label{equation-cheb-TheoremIneq}
\kappa\leq
\frac{|a|+\sqrt{|a|^2+|c|^2}}
{\left||\lambda_1-d|+\sqrt{|\lambda_1-d|^2-|c|^2}\right|}.
\end{align}
\end{theorem}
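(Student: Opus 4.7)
The plan is to reduce the theorem to two scalar bounds via the identity $\kappa_i=|w_i|/|w_1|$ already established just before the statement, and then apply Lemma~\ref{lemma-cheb-inequalities} to the complex numbers $(\lambda_i-d)/c$ and $(\lambda_1-d)/c$. The only extra ingredient needed is the elementary geometric fact that the farthest point of the ellipse $E(d,c,a)$ from its center is at distance $|a|$.

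First I would treat the numerator. Applying the upper bound in Lemma~\ref{lemma-cheb-inequalities} with $z=(\lambda_i-d)/c$ for $i\ge 2$ gives
\begin{align*}
|w_i|\;\le\;\left|\tfrac{\lambda_i-d}{c}\right|+\sqrt{\left|\tfrac{\lambda_i-d}{c}\right|^{2}+1}.
\end{align*}
Since $t\mapsto t+\sqrt{t^{2}+1}$ is monotone increasing on $[0,\infty)$, this bound is maximized over the ellipse at whatever $\lambda_i\in E(d,c,a)$ maximizes $|\lambda_i-d|$. Here the geometric step enters: for any point on the ellipse $E(d,c,a)$, whether the fat case ($c$ and $a$ real) or the thin case ($c$ and $a$ purely imaginary), a direct parametrization shows $|\lambda_i-d|\le |a|$, with equality only at the two endpoints of the major axis. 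Substituting and clearing $|c|$ yields
\begin{align*}
|w_i|\;\le\;\frac{|a|}{|c|}+\sqrt{\Bigl(\tfrac{|a|}{|c|}\Bigr)^{2}+1}\;=\;\frac{|a|+\sqrt{|a|^{2}+|c|^{2}}}{|c|}.
\end{align*}

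Next I would handle the denominator. The lower bound in Lemma~\ref{lemma-cheb-inequalities} applied to $z=(\lambda_1-d)/c$ gives $|w_1|\ge \bigl||(\lambda_1-d)/c|+\sqrt{|(\lambda_1-d)/c|^{2}-1}\bigr|$. Multiplying by $|c|$ brings this to $|c|\cdot|w_1|\ge \bigl||\lambda_1-d|+\sqrt{|\lambda_1-d|^{2}-|c|^{2}}\bigr|$. Dividing the upper bound on $|w_i|$ by this lower bound on $|w_1|$ and taking the maximum over $i\ge 2$ produces precisely \eqref{equation-cheb-TheoremIneq}.

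The main obstacle I expect is the geometric step $|\lambda-d|\le|a|$ for $\lambda\in E(d,c,a)$. It is a familiar property of ellipses, but writing it cleanly in a way that covers both the fat ellipse (real $c,a$) and the thin ellipse (purely imaginary $c,a$) under the single notation of the paper requires a brief case split via the standard parametric description. A secondary technical point is preserving the outer absolute value around $\sqrt{|\lambda_1-d|^{2}-|c|^{2}}$ when $\lambda_1$ lies closer to $d$ than the focal distance $|c|$: in that case the radicand is negative and the bracketed expression has modulus exactly $|c|$, so the absolute value cannot be dropped, but it causes no harm because the lemma is stated and the theorem is written in exactly that form.
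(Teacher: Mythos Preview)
Your proposal is correct and matches the paper's proof essentially line for line: the paper also starts from the bounds on $|w_i|$ coming from Lemma~\ref{lemma-cheb-inequalities} (packaged there as inequality~\eqref{equation-cheb-OmegaBound}), uses $|\lambda_i-d|\le|a|$ for $i\ge 2$ together with the monotonicity of $t\mapsto t+\sqrt{t^2+1}$, and divides by the lower bound on $|w_1|$. The only cosmetic difference is that the paper simply asserts $\bigl|(\lambda_i-d)/c\bigr|\le\bigl|a/c\bigr|$ as an immediate consequence of containment in the ellipse, whereas you flag the geometric step explicitly and discuss the fat/thin case split and the sign of the radicand in the denominator; these are exactly the right points to note but do not alter the argument.
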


\begin{proof}
Since the ellipse $E(d, c, a)$ contains the eigenvalues $\{\lambda_i\}_{i=2}^n$, we have $\left|\frac{\lambda_i-d}{c}\right|\leq \left|\frac{a}{c}\right|$, $i=2, 3, \ldots, n$.
According to the estimates in (\ref{equation-cheb-OmegaBound}), we have
\begin{align*}
|w_i|
\leq \left|\frac{\lambda_i-d}{c}\right|
+\sqrt{\left|\frac{\lambda_i-d}{c}\right|^2+1}
\leq \left|\frac{a}{c}\right|+\sqrt{\left|\frac{a}{c}\right|^2+1},
\quad i=2, 3, \ldots, n,
\end{align*}
and
\begin{align*}
|w_1|
\geq \left|\left|\frac{\lambda_1-d}{c}\right|
+\sqrt{\left|\frac{\lambda_1-d}{c}\right|^2-1}\right|.
\end{align*}
Therefore, the maximum damping coefficient $\kappa$ satisfies
\begin{align*}
\kappa
= \max_{2\leq i\leq n} {\kappa}_i
= \max_{2\leq i\leq n} \left|\frac{w_i}{w_1}\right|
& \leq \frac{\left|\frac{a}{c}\right|+\sqrt{\left|\frac{a}{c}\right|^2+1}}
{\left|\left|\frac{\lambda_1-d}{c}\right|
+\sqrt{\left|\frac{\lambda_1-d}{c}\right|^2-1}\right|} \\
& = \frac{|a|+\sqrt{|a|^2+|c|^2}}
{\left||\lambda_1-d|+\sqrt{|\lambda_1-d|^2-|c|^2}\right|},
\end{align*}
which leads to the validity of estimate (\ref{equation-cheb-TheoremIneq}).
\end{proof}

If the imaginary parts of the undesired eigenvalues of matrix $A$ are relatively small, that is, the ellipse $E(d, c, a)$ determined by the Chebyshev iteration is fat, like the left picture in Figure \ref{fig:ellipse-fat-thin}, it is beneficial to compute the eigenvalues with the largest real parts. In fact, if $\lambda_1$ is well separated from the others $\{\lambda_i\}_{i=2}^{n}$, say, the real or imaginary parts of $\lambda_1$ is very large, which also indicates that $|\lambda_1-d|$ is relatively large compared with $a$, we can chose proper parameters $a$ and $c$ to ensure the maximum damping coefficient $\kappa$ as small as possible.

However, for the case that the imaginary parts of the undesired eigenvalues of matrix $A$ are dominant, if the ellipse $E(d, c, a)$ determined by the Chebyshev iteration is thin, like the right picture in Figure \ref{fig:ellipse-fat-thin}, the upper bound of $\kappa$ in Theorem \ref{theorem-cheb-Conv} may be very large as $|a|>|\lambda_1-d|$ may hold. This is disadvantageous for us to compute the eigenvalues with the largest real parts but advantageous for us to compute the eigenvalues with the largest imaginary parts. Therefore, for computing the eigenvalues with the largest real parts in this case, in the actual realizations of the proposed method, we prefer to determine an ellipse which is closer to a fat one. We will discuss how to determine the ellipse in the next subsection.

\subsection{Determining an Ellipse}
In this section, we consider how to determine a fat ellipse $E(d,c,a)$ to compute the eigenvalues with largest real parts. Denote by $\{\lambda_i\}_{i=r+1}^{n}$ the unwanted eigenvalues, and by $x_{+}=\max\limits_{r+1\leq i\leq n}\text{Re}(\lambda_i)$, $x_{-}=\min\limits_{r+1\leq i\leq n}\text{Re}(\lambda_i)$ and $y_{+}=\max\limits_{r+1\leq i\leq n}\text{Im}(\lambda_i)$. Without loss of generality, we suppose that $r=1$. Let $\zeta$ be a real number in the interval $(x_+, \lambda_1)$ when $\lambda_1$ is real, otherwise, it can be replaced by $\text{Re}(\lambda_1)$ when the imaginary part of $\lambda_1$ is not zero.

When $y_{+}^2<(\zeta-x_{+})(\zeta-x_{-})$, i.e., the imaginary parts of the undesired eigenvalues of matrix $A$ are relatively small, let $d=\frac{x_{+}+x_{-}}{2}$ and the ellipse $E(d,c,a)$ pass through the point $(x_{+},y_{+})$ to guarantee all the undesired eigenvalues being in the ellipse. Then the center $d$, the major semiaxis $a$ and the minor semiaxis $b$ of this ellipse satisfy
\begin{align}
\label{equation-furt-ElliExpr-fat}
\frac{(x_{+}-d)^2}{|a|^2}+\frac{y_{+}^2}{|b|^2}=1.
\end{align}
Note that condition $y_{+}^2<(\zeta-x_{+})(\zeta-x_{-})$ results in $\sqrt{(x_+-d)^2+y_+^2}<|\lambda_1-d|$, thus, we suppose relation $\sqrt{(x_+-d)^2+y_+^2}\leq |a|<|\lambda_1-d|$ holds true to guarantee $|b|\leq |a|$ and the desired eigenvalue $\lambda_1$ being outside of it.

Then, if $|a|^2-(x_{+}-d)^2=0$, we can obtain the expression of $a$ as $|a|=x_{+}-d$, and the upper bound $\kappa_u(|c|)$ of the damping coefficient $\kappa$ can be rewritten as
\begin{align*}
\kappa_u(|c|):=\frac{|a|+\sqrt{|a|^2+|c|^2}}
{\left||\lambda_1-d|+\sqrt{|\lambda_1-d|^2-|c|^2}\right|}
=\frac{x_{+}-d+\sqrt{(x_{+}-d)^2+|c|^2}}
{|\lambda_1-d|+\sqrt{|\lambda_1-d|^2-|c|^2}}.
\end{align*}
Thus, $\kappa_u(|c|)$ reaches its minimum $\kappa_u(|c_o|)=\frac{x_{+}-d}{|\lambda_1-d|}<1$ at $c_o=0$. If $|a|^2-(x_{+}-d)^2\neq0$, from equality (\ref{equation-furt-ElliExpr-fat}), we can obtain the expression of $b$ with respect to $a$ as $|b|=\frac{|a| \, y_{+}}{\sqrt{|a|^2-(x_{+}-d)^2}}$. Combining this expression of $b$ and the fact $|a|^2=|b|^2+|c|^2$, the upper bound $\kappa_u(|a|)$ of the damping coefficient $\kappa$ can be rewritten as
\begin{align*}
\kappa_u(|a|):=\frac{|a|+\sqrt{|a|^2+|c|^2}}
{\left||\lambda_1-d|+\sqrt{|\lambda_1-d|^2-|c|^2}\right|}
=\frac{|a|+\sqrt{2|a|^2-\frac{|a|^2y_+^2}{|a|^2-(x_+-d)^2}}}
{|\lambda_1-d|+\sqrt{|\lambda_1-d|^2-|a|^2+\frac{|a|^2y_+^2}{|a|^2-(x_+-d)^2}}}.
\end{align*}
Note that $\kappa_u(|a|)$ is a monotonically increasing function in interval $\left[\sqrt{(x_+-d)^2+y_+^2}, |\lambda_1-d|\right)$, thus, the minimum of $\kappa_u(|a|)$ is achieved at $|a_o|=\sqrt{(x_+-d)^2+y_+^2}$. By straight computations, we obtain $\kappa_u(|a_o|)=\frac{\sqrt{(x_+-d)^2+y_+^2}}{|\lambda_1-d|}<1$. At this time, the corresponding parameters $|b_o|=\frac{|a_o| \, y_{+}}{\sqrt{|a_o|^2-(x_{+}-d)^2}}=|a_o|$ and $|c_o|=\sqrt{|a_o|^2-|b_o|^2}=0$ can be obtained easily.

When $y_{+}^2\geq(\zeta-x_{+})(\zeta-x_{-})$, i.e., the imaginary parts of the undesired eigenvalues of matrix $A$ are relatively large, let the fat ellipse $E(d,c,a)$ pass through the points $(x_{+},y_{+})$ and $(\zeta,0)$. Then the center $d$, the major semiaxis $a$ and the minor semiaxis $b$ of this ellipse satisfy equality (\ref{equation-furt-ElliExpr-fat}) and $|a|^2=(\zeta-d)^2$. Suppose the relation $d\leq\frac{\zeta^2-x_{+}^2-y_{+}^2}{2(\zeta-x_{+})}$ holds true, since $\frac{\zeta^2-x_{+}^2-y_{+}^2}{2(\zeta-x_{+})}\leq\frac{x_{+}+x_{-}}{2}$, we can obtain $|b|=\frac{|a| \, y_{+}}{\sqrt{|a|^2-(x_{+}-d)^2}}\leq |a|=\zeta-d$ and all the undesired eigenvalues are contained in the ellipse. The area of the ellipse
\begin{align*}
S(d)=\frac{\pi |a|^2y_{+}}{\sqrt{|a|^2-(x_{+}-d)^2}}=\frac{\pi(\zeta-d)^2y_{+}}{\sqrt{(\zeta-d)^2-(x_{+}-d)^2}}
\end{align*}
is decreasing with respect to $d$, thus the area of the ellipse $S(d)$ reaches its minimum at $d_o=\frac{\zeta^2-x_{+}^2-y_{+}^2}{2(\zeta-x_{+})}$. At this time, the corresponding parameters $|a_o|=|b_o|=\frac{(\zeta-x_{+})^2+y_{+}^2}{2(\zeta-x_{+})}$ and $c_o=0$ can be obtained easily. By straight computations, we can obtain the corresponding upper bound of the damping coefficient $\kappa_u=\frac{|a_o|}{|\lambda_1-d_o|}<1$.

Next, we use Figure \ref{figure-cheb-Ellipse} to depict the fat ellipse determined by the above process.
\begin{figure}[ht]
\centering
\setlength{\unitlength}{1cm}
\begin{minipage}[t]{6cm}
\begin{picture}(6,4.5)
\includegraphics[width=6cm,height=4.5cm]{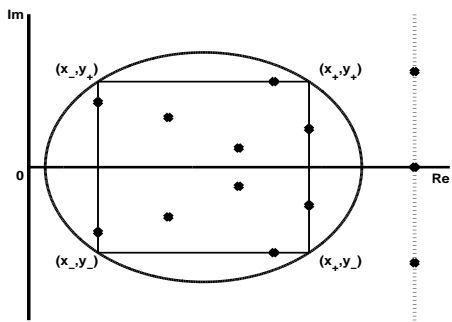}
\end{picture}\par
\end{minipage}\hfill
\caption{\label{figure-cheb-Ellipse}
Ellipse containing undesired approximate eigenvalues}
\end{figure}

\section{Numerical Experiments}
\label{section-nume}
In this section, by using the Chebyshev polynomial with the ellipse determined in Section~\ref{section-cheb}, we examine the numerical behavior of the {\em relaxed filtered Krylov subspace} ({\bf RFKS}) method, and compare it with the {\em filtered Krylov subspace} ({\bf FKS}) method, the {\em Chebyshev-Davidson} ({\bf CD}) method and the {\em Arnoldi-Chebyshev} ({\bf AC}) method in terms of the count of iteration steps ({``IT''}), the number of matrix-vector products ({``MV''}) and the computing time in seconds ({``CPU''}) for computing the eigenvalues with the largest real parts of the non-symmetric eigenvalue problem (\ref{equation-intr-EigeProb}).

Recall that, with an initial vector $v$ and a fixed polynomial filter $p(\lambda)$, the FKS method projects matrix $A$ instead of $p(A)$ onto the filtered Krylov subspace (\ref{equation-intr-FilKrySub}), and uses eigenvalues of the corresponding projected matrices to approximate the desired ones. The polynomial filter $p(\lambda)$ in FKS method should be prescribed in advance, and in our numerical experiments we execute 20 iteration steps of the Arnoldi method to determine it. The framework of CD method for non-symmetric matrices is similar to that of CD method for symmetric matrices proposed in \cite{Zhou-10,ZhouS-07}. We remark that the FKS and CD methods are essentially special cases of the RFKS method, in which vector $s_k$ is chosen as $s_k=(0,\ldots,0,1)^T\in\mathbb{R}^{k}$ and $s_k=y_1$, respectively. Here, $y_1$ is the eigenvector corresponding to the eigenvalue $\theta_1$ whose real part is the largest among all eigenvalues of the projected matrix $H_{k}$. In addition, in the following numerical results, the relaxed vector $s_k$ in RFKS method is chosen as that in (\ref{equation-rela-RelVecS}). Intuitively, the CD method may outperform the FKS method, as vector $V_ks_k$ in CD method contains more valuable information of the projection subspace than that in FKS method. Similarly, considering the construction of vector $s_k$ in (\ref{equation-rela-RelVecS}), the RFKS method would exhibit better numerical behavior than the FKS and CD methods for the flexibility of the polynomial filters and the efficient selection of vector $s_k$. These assertions will be verified in the subsequent numerical results.

The number of Rayleigh-Ritz processes is referred to as the count of iteration steps for each tested method. There are two parameters, i.e., the restart number $n_r$ and the order $m$ of the polynomial filter, involved in AC, CD, FKS and RFKS methods. Here, the restart number $n_r$ in AC method refers to the number of Arnoldi iterations. Different from the other three methods, the dimension of the projected Krylov subspace in AC method is fixed to $n_r$ while in CD, FKS and RFKS methods the dimensions of the projected subspaces gradually increase step by step. Thus, to be reasonable, the restart numbers in CD, FKS and RFKS methods are set to be larger than that in AC method.

For the sake of fairness, all methods are started from the vector whose entries are all set to be one, and the whole iteration process is terminated once the current relative residual norm achieves the stopping criterion $\frac{\|r^{(k)}\|}{\|r^{(0)}\|}\leq 10^{-10}$. Here, $r^{(k)}=(A-\theta^{(k)}I)x^{(k)}$ is the $k$-th residual vector with $\theta^{(k)}$ being the Rayleigh quotient associated with the approximate eigenvector $x^{(k)}$.

All numerical experiments are performed by making use of MATLAB (version R2017a) on a personal computer with $1.4$ GHz central processing unit (Intel (R) Core (TM) i5), $8.00$ GB memory, and macOS Catalina operating system (2019).

We consider the following two-dimensional partial differential system \cite{Saad-84}
\begin{align}
\label{equation-nume-exam01}
-\frac{\partial}{\partial \xu}\left(\omega(\xu,\yu)\frac{\partial \uu}{\partial \xu}\right)
-\frac{\partial}{\partial \yu}\left(\gamma(\xu,\yu)\frac{\partial \uu}{\partial \yu}\right)
+\frac{\partial}{\partial \xu}\bigg(\mu(\xu,\yu)\uu\bigg)
+\frac{\partial}{\partial \yu}\bigg(\nu(\xu,\yu)\uu\bigg)
=\lambda \uu,
\end{align}
which is defined on the domain of $[-1,1]\times[-1,1]$, and imposed with the homogeneous Dirichlet boundary condition. We discrete the problem in (\ref{equation-nume-exam01}) by the finite difference scheme on an $N\times N$ grid with the mesh size being both equal to $h=1/(N+1)$, obtaining the standard non-symmetric eigenvalue problem (\ref{equation-intr-EigeProb}). Note that the size of matrix $A$ is $n=N^2$.

The following two choices of the coefficient functions $\omega(\xu,\yu)$, $\gamma(\xu,\yu)$, $\mu(\xu,\yu)$ and $\nu(\xu,\yu)$ are considered:
\begin{align*}
\left\{
\begin{array}{l}
\text{Case (I):} \quad \omega(\xu,\yu)=-1, \gamma(\xu,\yu)=-\frac{10}{1+\xu \yu}, \mu(\xu,\yu)=1 ~\text{and}~ \nu(\xu,\yu)=\frac{1}{1+\xu\yu}; \\ [2mm]
\text{Case (II):} \quad \omega(\xu,\yu)=-e^{\xu\yu}, \gamma(\xu,\yu)=-\frac{10}{1+\xu\yu}, \mu(\xu,\yu)=\sin(1+\xu\yu) ~\text{and}~ \nu(\xu,\yu)=\frac{1}{1+\xu\yu}.
\end{array}
\right.
\end{align*}
Besides, we choose the size of matrix $A$ as $n=N^2$ with $N$ being $N_1=200$, $N_2=250$ and $N_3=300$.

In Table \ref{table-nume-exam01-tab01}, we report the numerical results for computing one eigenvalue with the largest real part and the corresponding eigenvector of the non-symmetric eigenvalue problem (\ref{equation-intr-EigeProb}). In both Cases (I) and (II), the Chebyshev order is 60, the number of Arnoldi iterations is set to be 20, and the restart number in CD, FKS and RFKS is set to be 40. We also depict the corresponding curves of the residual norm (``RES"), i.e., $\|r^{(k)}\|$, versus the number of iteration steps in Figure \ref{figure-num01-fig01}.

\begin{table}[ht]
\centering
\setlength{\abovecaptionskip}{0pt}
\setlength{\belowcaptionskip}{10pt}
\caption{\label{table-nume-exam01-tab01}
Numerical results for AC, CD, FKS and RFKS}
\begin{tabular}{|c|c|c|c|c|c|c|c|c|c|}
\hline
\multicolumn{2}{|c|}{Case} & \multicolumn{4}{|c|}{Case~(I)} & \multicolumn{4}{|c|}{Case~(II)} \\ \hline
\multicolumn{2}{|c|}{Method} & AC & CD & FKS & RFKS & AC & CD & FKS & RFKS \\ \hline
& IT &3098&2031&2735&1620&2834&1799&2557&1517 \\ \cline{2-10}
\raisebox{1.5ex}[0pt]{$N_1$}
& MV &247840&122862&169589&97980&226720&108838&158553&91774 \\ \cline{2-10}
& CPU &169.75&60.39&82.59&54.74&158.23&53.44&78.49&51.97 \\ \hline
& IT &5782&3720&3999&2258&5326&3160&4546&2259 \\ \cline{2-10}
\raisebox{1.5ex}[0pt]{$N_2$}
& MV &462560&225060&248019&136576&426080&191180&281871&136638 \\ \cline{2-10}
& CPU &456.67&180.16&197.98&118.08&419.28&154.71&228.56&118.04 \\ \hline
& IT &9608&4027&8036&4016&8822&3640&7614&3195 \\ \cline{2-10}
\raisebox{1.5ex}[0pt]{$N_3$}
& MV &768640&243614&514991&242932&705760&220220&472087&193290 \\ \cline{2-10}
& CPU &982.27&254.68&536.02&253.59&1041.54&228.69&493.55&210.76 \\ \hline
\end{tabular}
\end{table}

From Table \ref{table-nume-exam01-tab01}, we observe that the three methods CD, FKS and RFKS significantly outperform the AC method in terms of the number of iteration steps, the number of matrix-vector products and the computing time in seconds. Moreover, among the three methods, the RFKS method behaves more effective than the CD and FKS methods for consuming less iteration steps, matrix-vector products and computing time. As for the CD and FKS methods, the CD method exhibits better numerical behavior than the FKS method. In addition, it indicates that the iteration indexes, i.e., IT, MV and CPU, increase as the problem size increases.

We also use the numerical results in Table \ref{table-nume-exam01-tab02} to exhibit the numerical behavior of the four tested methods for computing one eigenvalue with the largest real part and the corresponding eigenvector of the non-symmetric eigenvalue problem (\ref{equation-intr-EigeProb}). In both Cases (I) and (II), the Chebyshev order is 60, the number of Arnoldi iterations is set to be 30, and the restart number in CD, FKS and RFKS is set to be 60. For this setting, in Figure~\ref{figure-num01-fig02} we also depict the corresponding curves of the residual norm versus the number of iteration steps for the AC, CD, FKS and RFKS methods.

\begin{table}[ht]
\centering
\setlength{\abovecaptionskip}{0pt}
\setlength{\belowcaptionskip}{10pt}
\caption{\label{table-nume-exam01-tab02}
Numerical results for AC, CD, FKS and RFKS}
\begin{tabular}{|c|c|c|c|c|c|c|c|c|c|}
\hline
\multicolumn{2}{|c|}{Case} & \multicolumn{4}{|c|}{Case~(I)} & \multicolumn{4}{|c|}{Case~(II)} \\ \hline
\multicolumn{2}{|c|}{Method} & AC & CD & FKS & RFKS & AC & CD & FKS & RFKS \\ \hline
& IT &2018&1859&1915&1503&1862&1720&1848&1479 \\ \cline{2-10}
\raisebox{1.5ex}[0pt]{$N_1$}
& MV &181620&113398&118749&91626&167580&104900&114595&90198 \\ \cline{2-10}
& CPU &155.72&59.31&61.73&51.56&147.90&55.74&60.94&53.20 \\ \hline
& IT &3756&3120&3476&2168&3452&2815&3289&2307 \\ \cline{2-10}
\raisebox{1.5ex}[0pt]{$N_2$}
& MV &338040&190320&215531&132196&310680&171710&203937&140694 \\ \cline{2-10}
& CPU &400.30&160.07&180.44&117.61&370.97&143.38&168.38&124.58 \\ \hline
& IT &6262&4500&5736&2703&5738&4496&5304&3099 \\ \cline{2-10}
\raisebox{1.5ex}[0pt]{$N_3$}
& MV &563580&274500&355651&164826&516420&274252&328867&189018 \\ \cline{2-10}
& CPU &874.59&292.33&380.82&186.62&867.08&293.32&354.70&212.36 \\ \hline
\end{tabular}
\end{table}

From Table \ref{table-nume-exam01-tab02}, we can observe the phenomena similar to those in Table \ref{table-nume-exam01-tab01}. That is, the RFKS method exhibits the best numerical behavior in terms of the number of iteration steps, the number of matrix-vector products and the computing time in seconds. Also, the proposed three methods, CD, FKS and RFKS, behave more effective than the AC method.

From Figures \ref{figure-num01-fig01} and \ref{figure-num01-fig02}, we find that among these methods the iteration counts of the RFKS method are the smallest, and those of the AC method are the largest. Far from convergence, i.e., at the start of the iteration process, the AC and RFKS methods seem to be more effective, while close to convergence the CD and RFKS methods behave more excellently than the other two methods. It also indicates that the RESs of the FKS and CD methods vibrate heavily as the residual norm decreases. In addition, closer to convergence, it looks that the RFKS method obtains a more faster convergence rate than the other methods.

\begin{figure}[ht]
\setlength{\unitlength}{1cm}
\begin{minipage}[t]{7cm}
\begin{picture}(6,5)
\includegraphics[width=7cm,height=5cm]{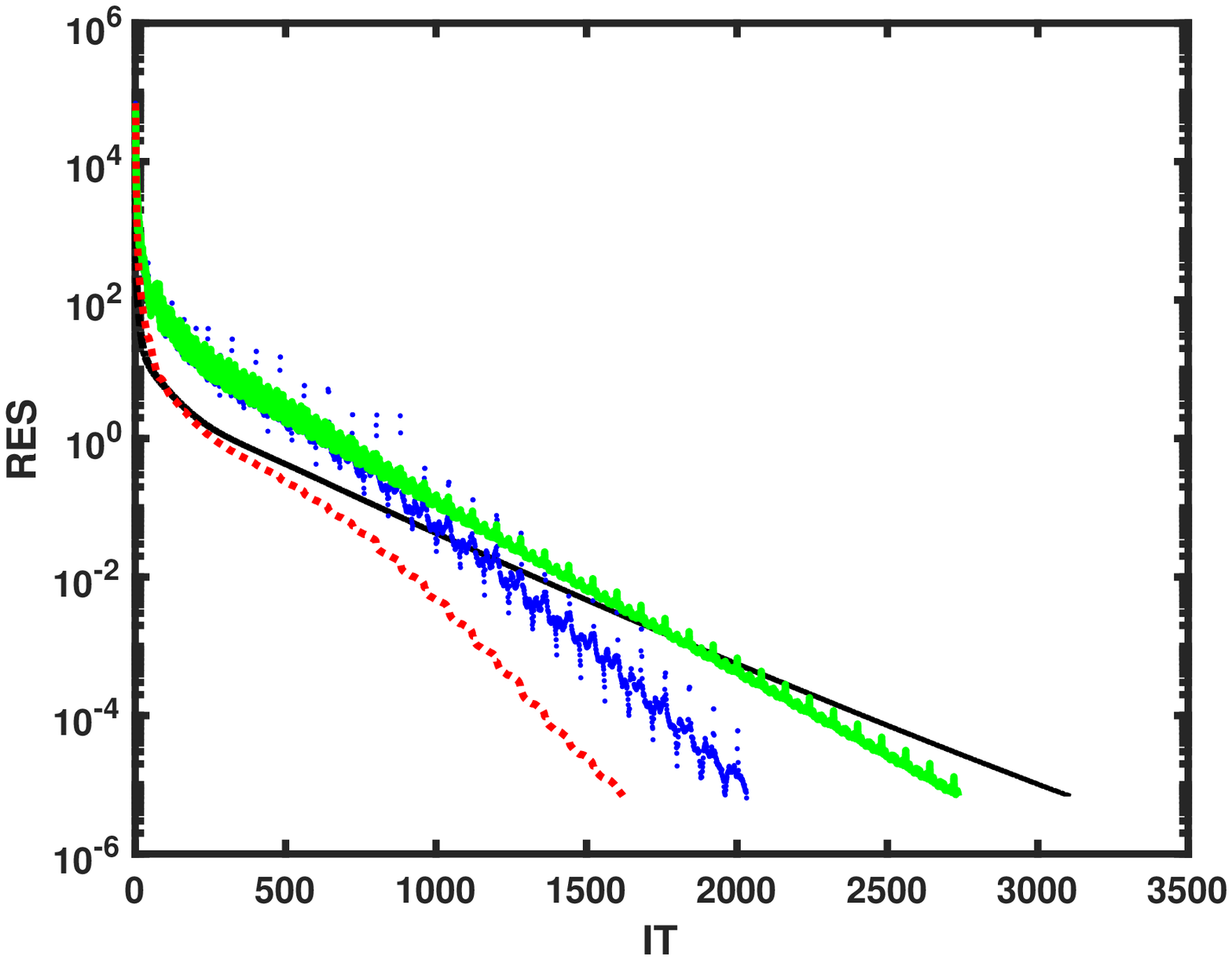}
\end{picture}\par
\end{minipage}\hfill
\begin{minipage}[t]{7cm}
\begin{picture}(6,5)
\includegraphics[width=7cm,height=5cm]{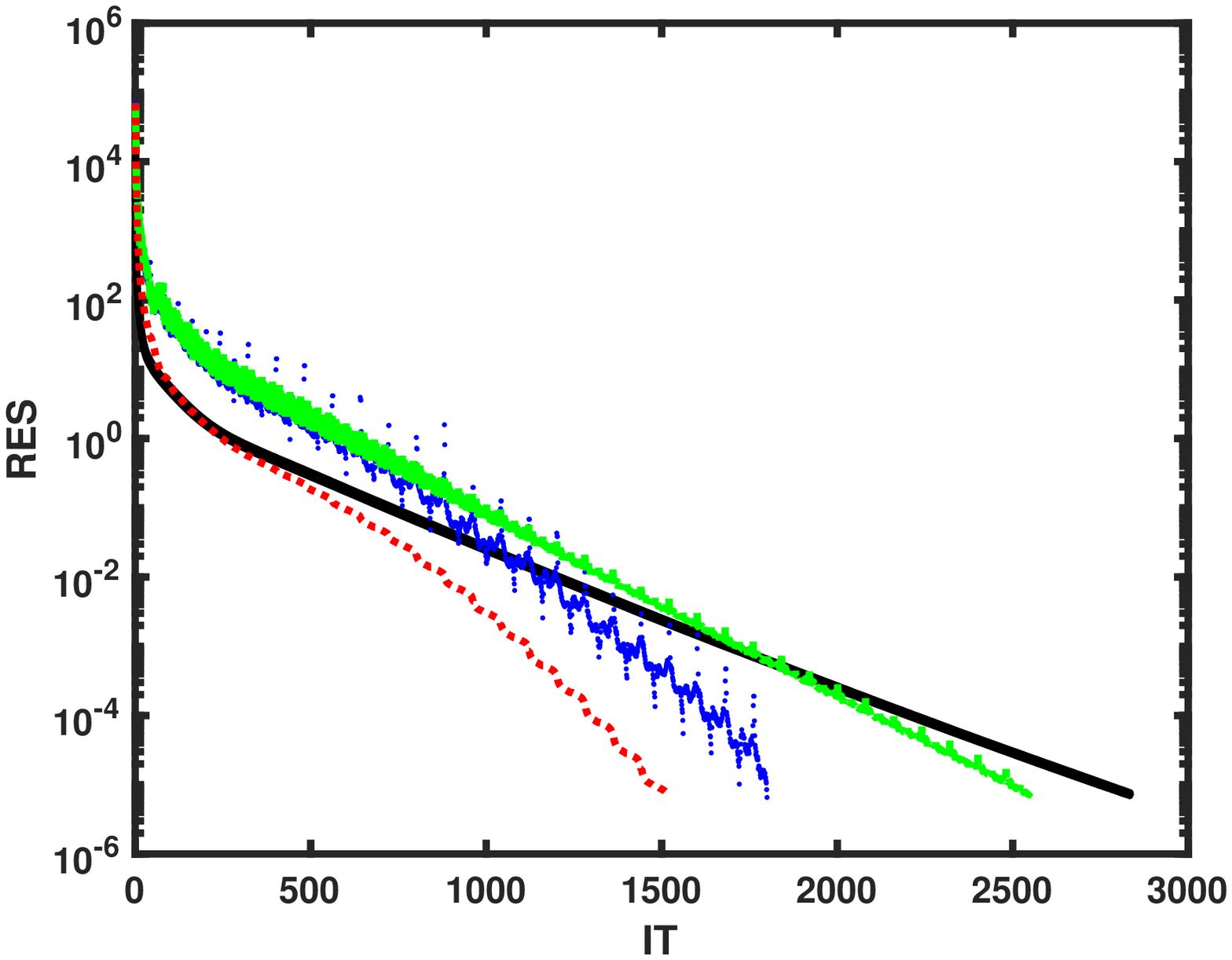}
\end{picture}\par
\end{minipage}
\caption{\label{figure-num01-fig01}
Pictures of RES versus IT when $N=N_1$ for Case (I) (left) and Case (II) (right). The black solid line ``------", the blue dotted line $``\cdot\cdot\cdot"$, the green line $``-\cdot-\cdot-\cdot"$ and the red dashed line $``---"$ represent the AC, CD, FKS and RFKS methods, respectively.
}
\end{figure}

\begin{figure}[ht]
\setlength{\unitlength}{1cm}
\begin{minipage}[t]{7cm}
\begin{picture}(6,5)
\includegraphics[width=7cm,height=5cm]{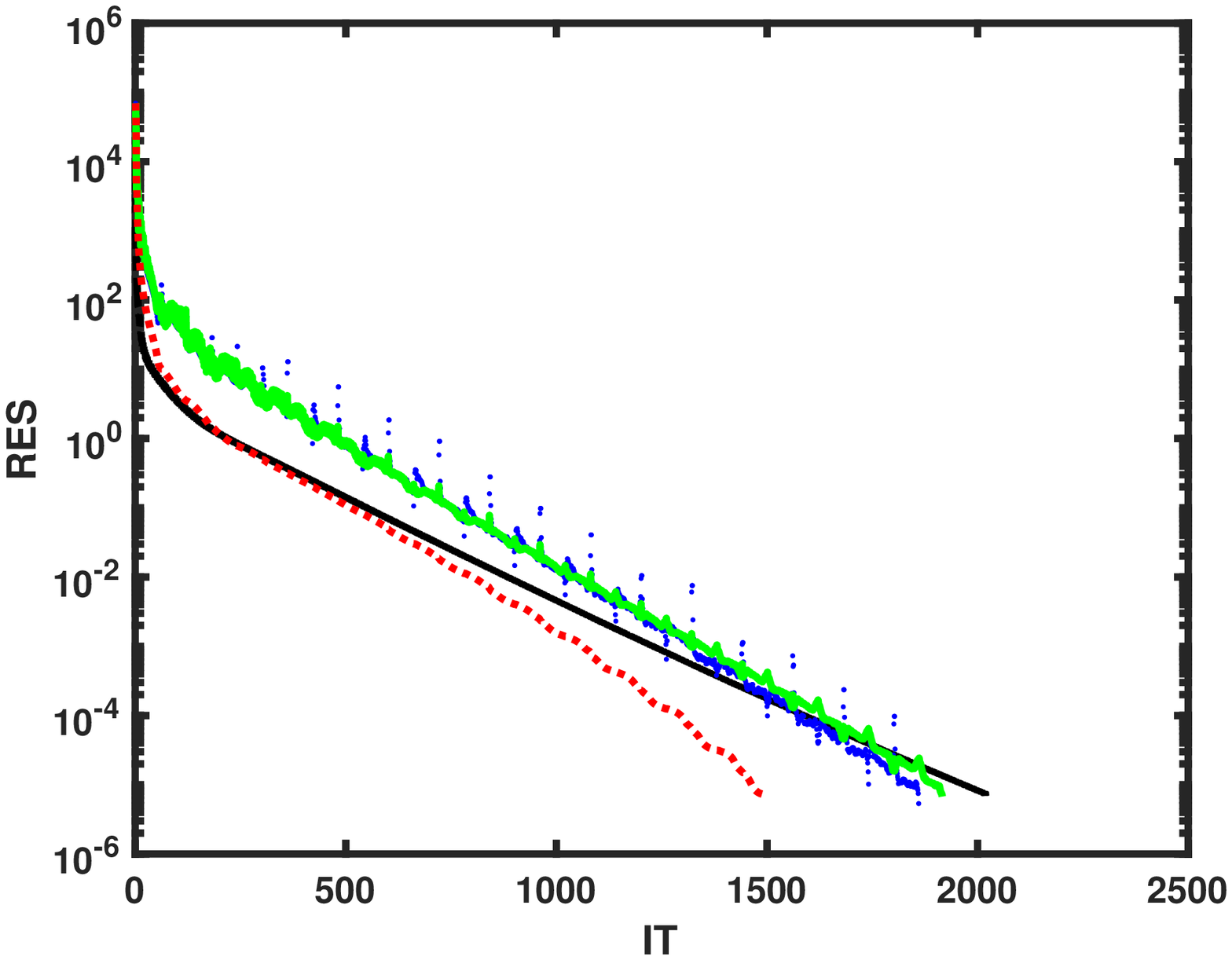}
\end{picture}\par
\end{minipage}\hfill
\begin{minipage}[t]{7cm}
\begin{picture}(6,5)
\includegraphics[width=7cm,height=5cm]{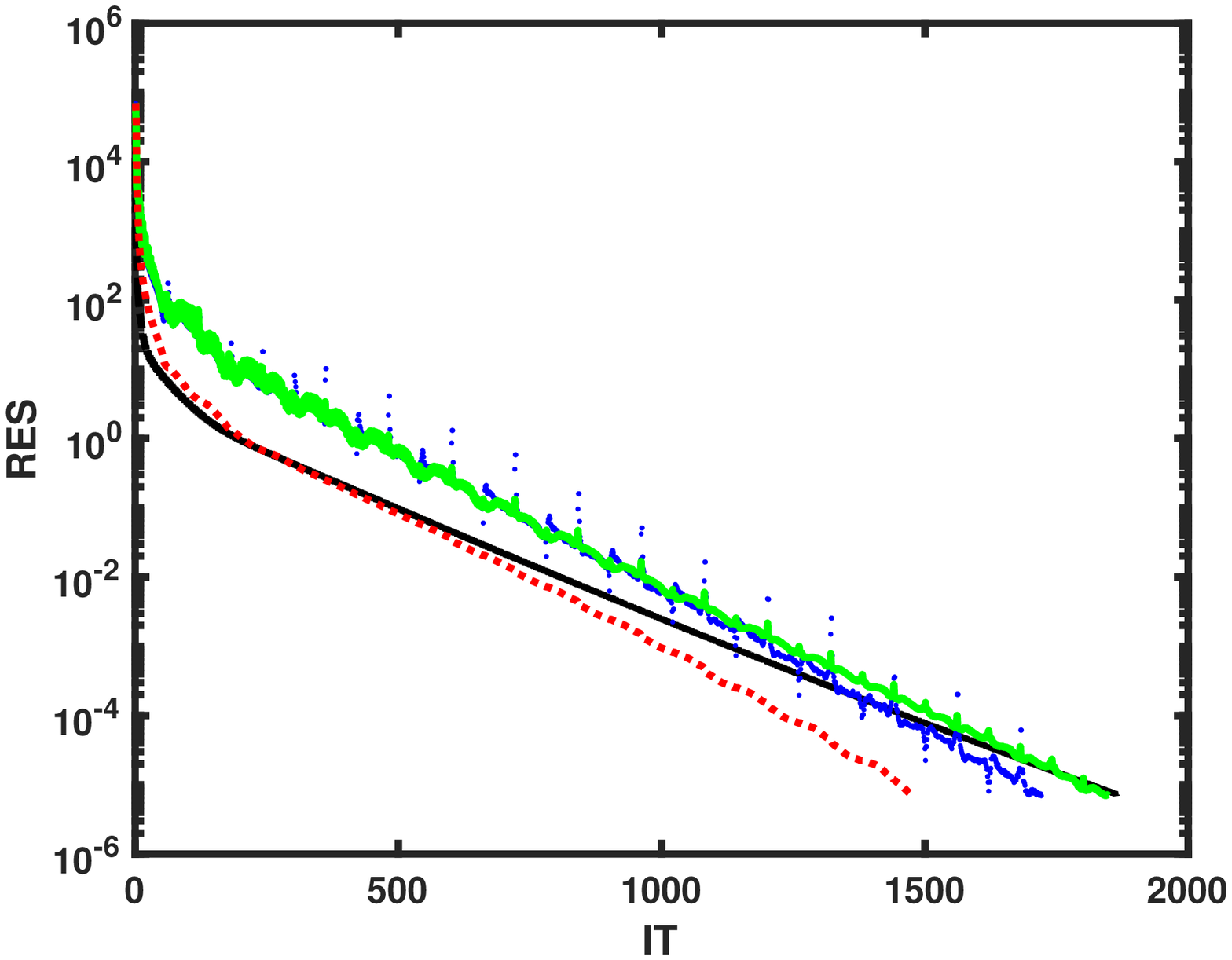}
\end{picture}\par
\end{minipage}
\caption{\label{figure-num01-fig02}
Pictures of RES versus IT when $N=N_1$ for Case (I) (left) and Case (II) (right). The black solid line ``------", the blue dotted line $``\cdot\cdot\cdot"$, the green line $``-\cdot-\cdot-\cdot"$ and the red dashed line $``---"$ represent the AC, CD, FKS and RFKS methods, respectively.
}
\end{figure}

We use Figure \ref{figure-num01-fig03} to depict the curves of the number of matrix-vector products versus the restart number. We can see from this figure that for a same polynomial order and a same restart number, the RFKS method outperforms the CD and FKS methods in terms of the number of matrix-vector products. Moreover, the number of matrix-vector products as well as the number of iteration steps decrease obviously as the restart number increases. However, as long as the restart number is large enough, the three methods may behave comparatively.

In addition, we use Figure \ref{figure-num01-fig04} to depict the curves of the number of matrix-vector products versus the polynomial order. From this figure, we can also observe that for a same polynomial order and a same restart number, the RFKS method outperforms the CD and FKS methods in terms of the number of matrix-vector products. However, it looks that in these numerical experiments increasing the polynomial order blindly may be meaningless. Unfortunately, we are unable to provide the optimal polynomial order no matter in theory or in practice.

\begin{figure}[ht]
\setlength{\unitlength}{1cm}
\begin{minipage}[t]{7cm}
\begin{picture}(6,5)
\includegraphics[width=7cm,height=5cm]{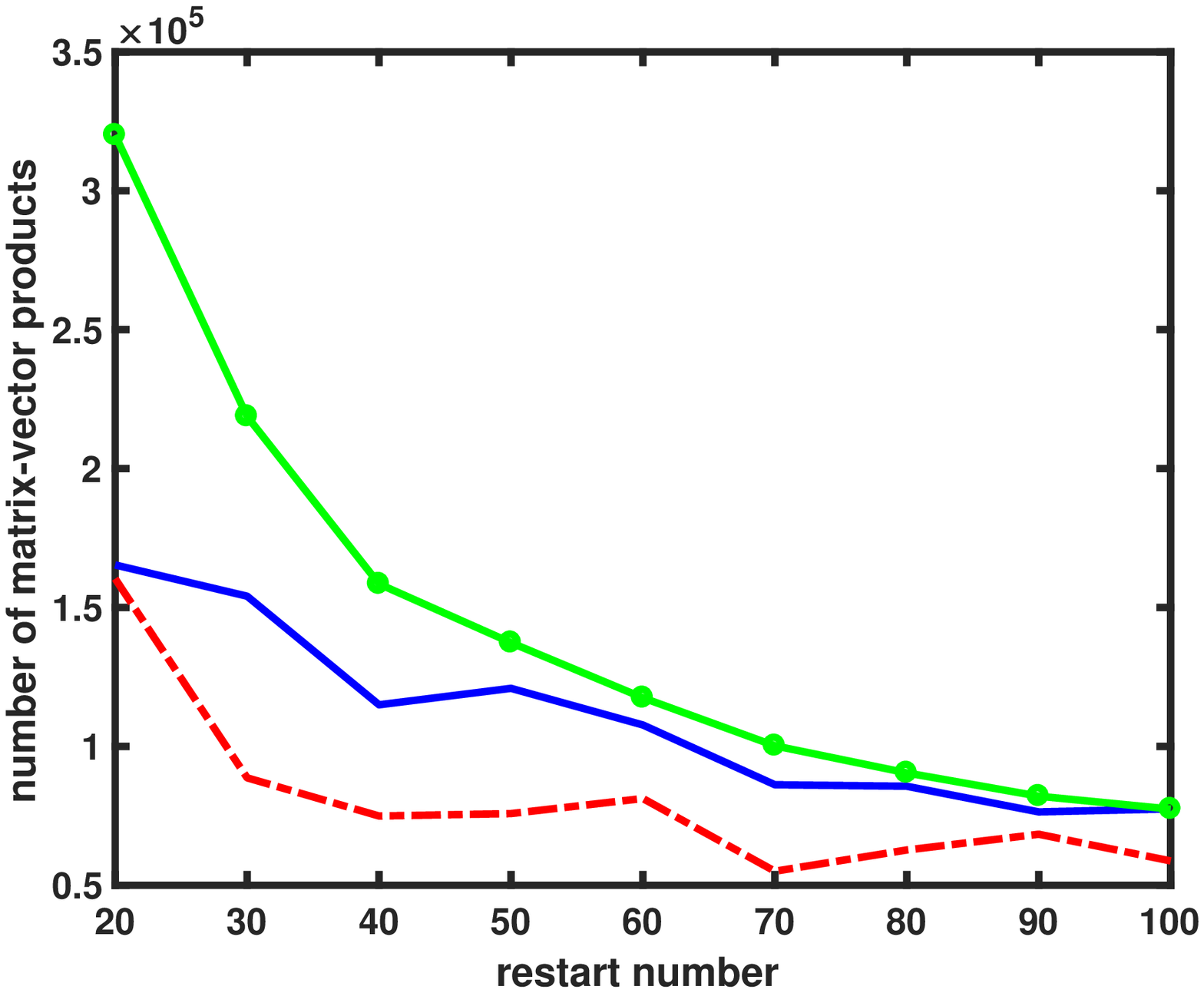}
\end{picture}\par
\end{minipage}\hfill
\begin{minipage}[t]{7cm}
\begin{picture}(6,5)
\includegraphics[width=7cm,height=5cm]{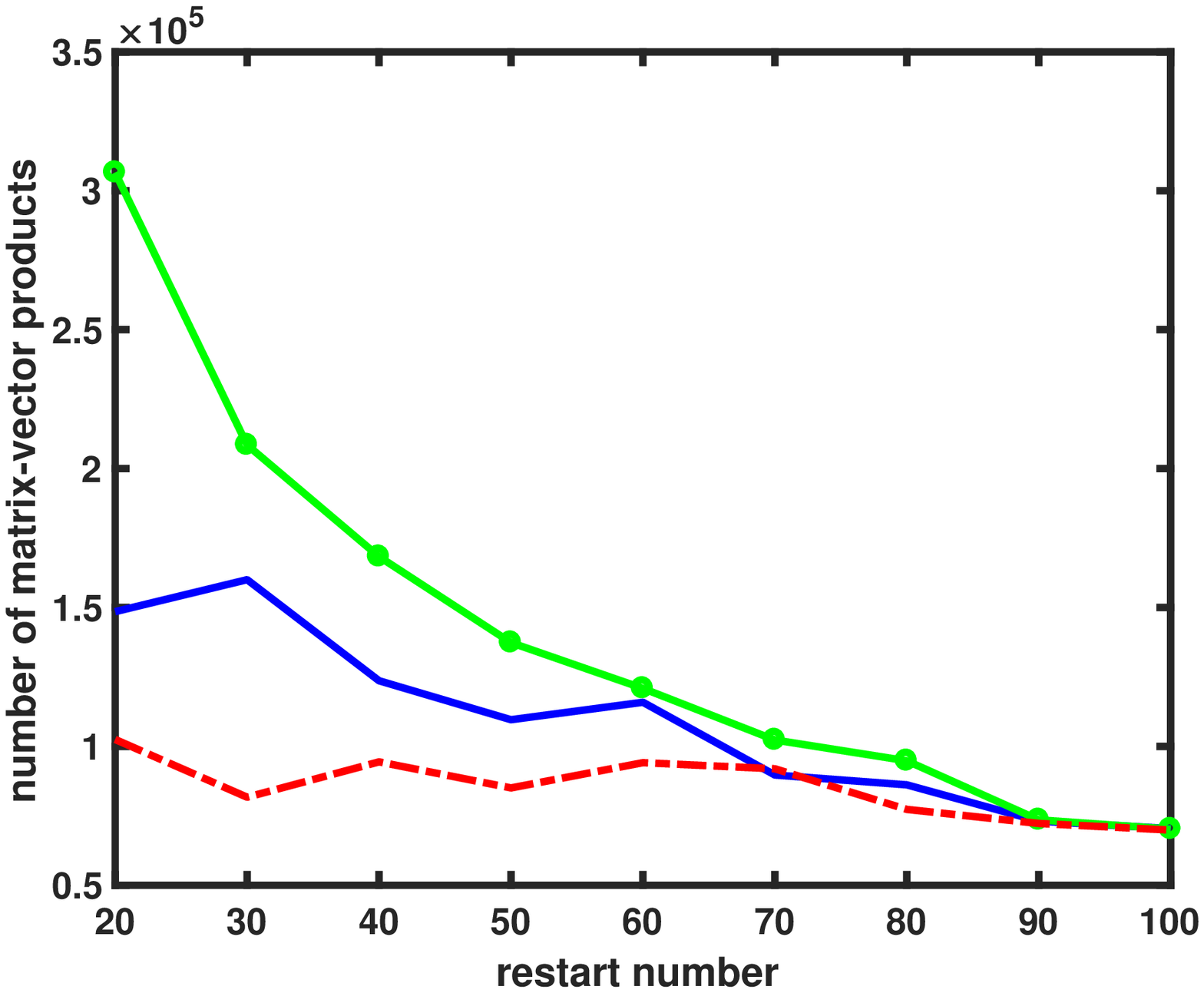}
\end{picture}\par
\end{minipage}
\caption{\label{figure-num01-fig03}
Pictures of the number of matrix-vector products versus the restart number when $N=N_1$ for Case (I) with the polynomial order being 30 (left) and being 50 (right). The green line $``-\circ-\circ-\circ"$, the blue solid line ``------" and the red line $``-\cdot-\cdot-\cdot"$ represent the FKS, CD and RFKS methods, respectively.
}
\end{figure}

\begin{figure}[ht]
\setlength{\unitlength}{1cm}
\begin{minipage}[t]{7cm}
\begin{picture}(6,5)
\includegraphics[width=7cm,height=5cm]{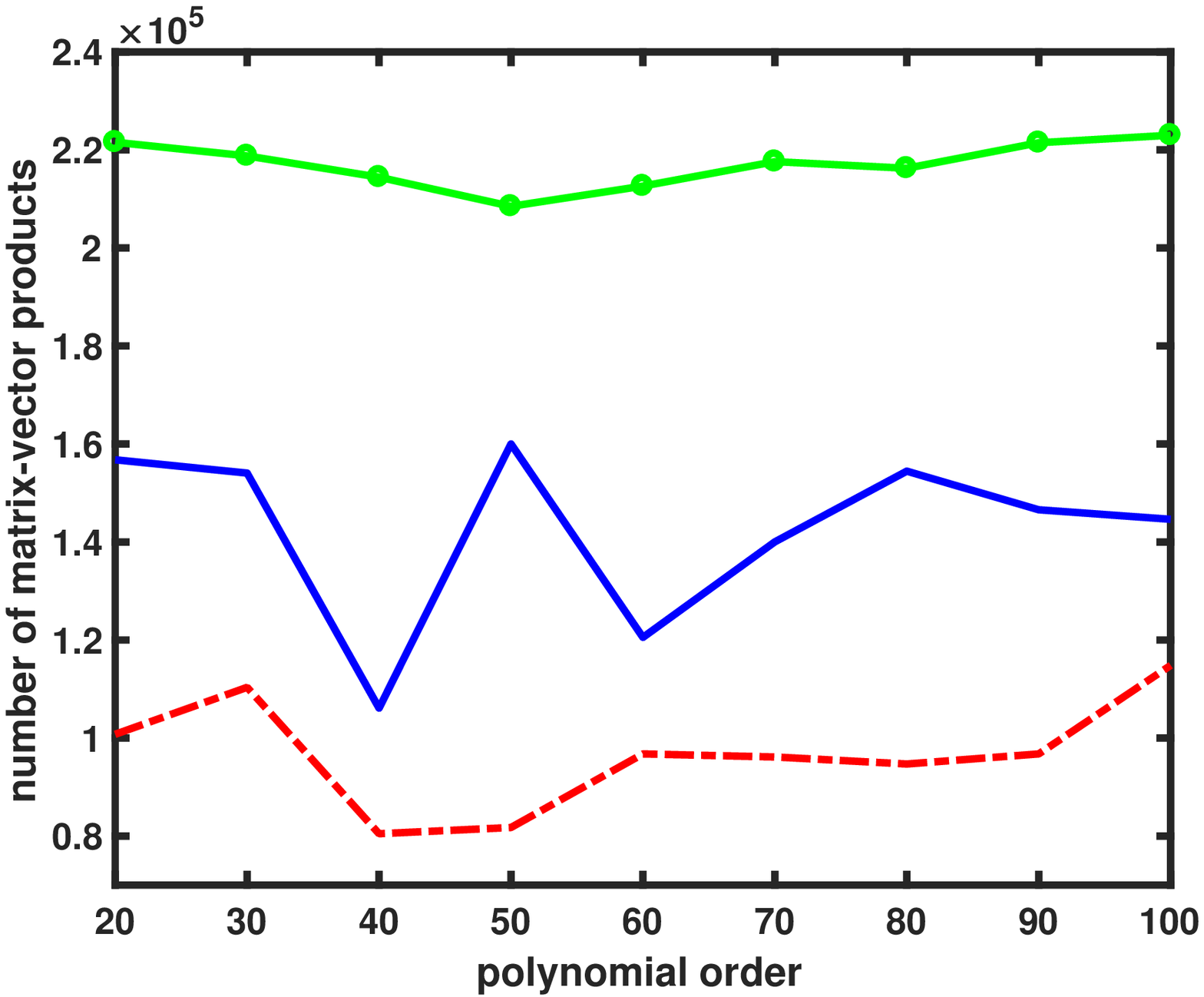}
\end{picture}\par
\end{minipage}\hfill
\begin{minipage}[t]{7cm}
\begin{picture}(6,5)
\includegraphics[width=7cm,height=5cm]{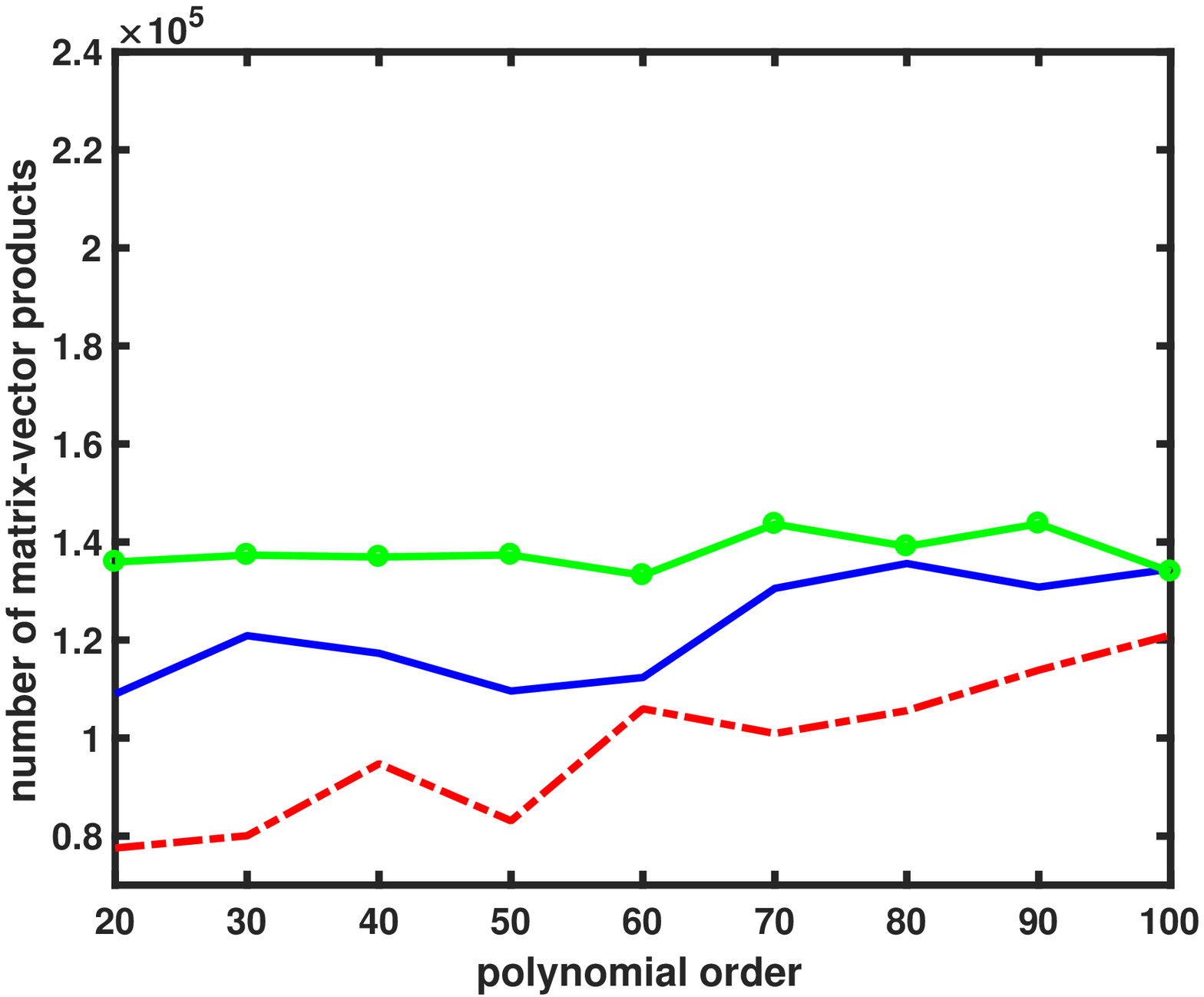}
\end{picture}\par
\end{minipage}
\caption{\label{figure-num01-fig04}
Pictures of the number of matrix-vector products versus the polynomial order when $N=N_1$ for Case (I) with the restart number being 30 (left) and being 50 (right). The green line $``-\circ-\circ-\circ"$, the blue solid line ``------" and the red line $``-\cdot-\cdot-\cdot"$ represent the FKS, CD and RFKS methods, respectively.
}
\end{figure}

\section{Concluding Remarks}
In this paper, we propose the relaxed filtered Krylov subspace method as well as the filtered Krylov subspace method and the Chebyshev-Davidson method for computing the eigenvalues with the largest real parts and the corresponding eigenvectors of non-symmetric matrices. We should remark that these methods are also suitable for solving non-Hermitian eigenvalue problems.

Admittedly, the two parameters of the restart number and the polynomial order, crucially affect the convergence property of the relaxed filtered Krylov subspace method. Large restart numbers and polynomial orders may accelerate the convergence of these methods, while, on the other hand, they would also result in huge computational costs. Consequently, there is a trade-off in the choice of these two parameters. However, their optimal and practical choices are extremely challenging problems in both theory and practice.

We should remark that the pair of eigenvectors associated with a conjugate pair of single eigenvalues are conjugate to each other. Therefore, in actual computations when encountered with complex eigenvalues, we can restrict the desired eigenvalue with its imaginary part being positive or negative so that the computed eigenvalue is single. However, when encountered with multiple and clustered eigenvalues, it prefers to work with a block of vectors instead of a single vector for its effectiveness and robustness. Block method for several eigenvalues is generally more complicated and difficult than a single vector form. Thus, future research in this issue is worth pursuing.

\clearpage

\section{Appendix}
\label{sect-append}

\begin{proof}[Proof of Lemma \ref{lemma-cheb-inequalities}]
First, we illuminate the validity of item ($i$). Using the facts $|\sqrt{\xi}|=\sqrt{|\xi|}$ and $|\xi^2|=|\xi|^2$ for any $\xi\in\mathbb{C}$, we have
\begin{align*}
|z+\sqrt{z^2-1}|\leq |z|+|\sqrt{z^2-1}|=|z|+\sqrt{|z^2-1|}\leq |z|+\sqrt{|z^2|+1}= |z|+\sqrt{|z|^2+1}.
\end{align*}
This implies that the third inequality of item ($i$) holds true.

Obviously, when $z=0$ the first and second inequalities of item ($i$) are valid. Thus, we now suppose that $z \neq 0$.

Since when $\Re(z)>0$, or when $\Re(z)=0$ and $\Im(z)>0$, it holds that $z=\sqrt{z^2}$, thus, by dividing both sides of the first and second inequalities of item ($i$) by $|z|$, we can equivalently obtain
\begin{align*}
\left|1-\sqrt{1-\frac{1}{z^2}}\right|
\leq \left|1+\sqrt{1-\frac{1}{|z|^2}}\right|
\leq \left|1+\sqrt{1-\frac{1}{z^2}}\right|.
\end{align*}
This means that, for any nonzero $\tilde{z}\in\mathbb{C}$, we just need to illuminate the validity of
\begin{align}
\label{equation-cheb-LemIneq01}
\left|1-\sqrt{1-\tilde{z}^2}\right|
\leq \left|1+\sqrt{1-|\tilde{z}|^2}\right|
\leq \left|1+\sqrt{1-\tilde{z}^2}\right|.
\end{align}

Denote by
\begin{align*}
\tilde{z}=|\tilde{z}|
(\cos\theta+i\sin\theta)
\quad\mbox{and}\quad
1-\tilde{z}^2=|1-\tilde{z}^2|
(\cos\eta+i\sin\eta),
\end{align*}
where
\begin{align*}
\cos\eta=\frac{1-|\tilde{z}|^2\cos2\theta}{|1-\tilde{z}^2|}
\quad\mbox{and}\quad
\sin\eta=-\frac{|\tilde{z}|^2\sin2\theta}{|1-\tilde{z}^2|}.
\end{align*}
Then, it holds that
\begin{align*}
\sqrt{1-\tilde{z}^2}
=\sqrt{|1-\tilde{z}^2|}
\, (\cos\tilde{\eta}+i\sin\tilde{\eta}),
\end{align*}
\begin{align}
\label{equation-cheb-LemIneq02}
\left|1-\sqrt{1-\tilde{z}^2}\right|
&=\left|1-\sqrt{|1-\tilde{z}^2|}
\, (\cos\tilde{\eta}+i\sin\tilde{\eta})\right| \notag \\
&=\sqrt{1+|1-\tilde{z}^2|-2\sqrt{|1-\tilde{z}^2|} \, \cos\tilde{\eta}},
\end{align}
and
\begin{align}
\label{equation-cheb-LemIneq03}
\left|1+\sqrt{1-\tilde{z}^2}\right|
&=\left|1+\sqrt{|1-\tilde{z}^2|}
\, (\cos\tilde{\eta}+i\sin\tilde{\eta})\right| \notag \\
&=\sqrt{1+|1-\tilde{z}^2|+2\sqrt{|1-\tilde{z}^2|} \, \cos\tilde{\eta}},
\end{align}
where $\cos\tilde{\eta}\geq 0$, $\cos2\tilde{\eta}=\cos\eta$, and $\sin2\tilde{\eta}=\sin\eta$.

If $|\tilde{z}| \geq 1$, then
\begin{align}
\label{equation-cheb-LemIneq04}
\left|1+\sqrt{1-|\tilde{z}|^2}\right|
=\left|1+\sqrt{|\tilde{z}|^2-1} \, i\right|
=\sqrt{1+|\tilde{z}|^2-1}
=|\tilde{z}|.
\end{align}
Therefore, to obtain the inequalities in (\ref{equation-cheb-LemIneq01}), according to the equalities (\ref{equation-cheb-LemIneq02}), (\ref{equation-cheb-LemIneq03}) and (\ref{equation-cheb-LemIneq04}), we just need to prove
\begin{align*}
\sqrt{1+|1-\tilde{z}^2|-2\sqrt{|1-\tilde{z}^2|} \, \cos\tilde{\eta}}
\leq |\tilde{z}|
\leq \sqrt{1+|1-\tilde{z}^2|+2\sqrt{|1-\tilde{z}^2|} \, \cos\tilde{\eta}},
\end{align*}
that is,
\begin{align}
\label{equation-cheb-LemIneq05}
1+|1-\tilde{z}^2|-2\sqrt{|1-\tilde{z}^2|} \, \cos\tilde{\eta}
\leq |\tilde{z}|^2
\leq 1+|1-\tilde{z}^2|+2\sqrt{|1-\tilde{z}^2|} \, \cos\tilde{\eta}.
\end{align}

Since
\begin{align*}
\cos^2\tilde{\eta}
=\frac{1+\cos2\tilde{\eta}}{2}
=\frac{1+\cos\eta}{2}
=\frac{1}{2}+\frac{1-|\tilde{z}|^2\cos2\theta}{2|1-\tilde{z}^2|},
\end{align*}
we have
\begin{align}
\label{equation-cheb-LemIneq06}
|1-\tilde{z}^2|\cos^2\tilde{\eta}
=\frac{|1-\tilde{z}^2|}{2}
+\frac{1-|\tilde{z}|^2\cos2\theta}{2}.
\end{align}
From
\begin{align}
\label{equation-cheb-LemIneq07}
|1-\tilde{z}^2| \leq 1+|\tilde{z}^2|=1+|\tilde{z}|^2
\quad\text{and}\quad
|1-\tilde{z}^2|
\geq |\tilde{z}^2|-1
=|\tilde{z}|^2-1,
\end{align}
we obtain
\begin{align}
\label{equation-cheb-LemIneq08}
\sqrt{1+|1-\tilde{z}^2|-|\tilde{z}|^2}\leq\sqrt{2}.
\end{align}
Thus, it holds that
\begin{align*}
1+|1-\tilde{z}^2|-|\tilde{z}|^2
& \leq \sqrt{2(1+|1-\tilde{z}^2|-|\tilde{z}|^2)} \\
& \leq \sqrt{2(1+|1-\tilde{z}^2|-|\tilde{z}|^2\cos2\theta)}.
\end{align*}
Then, from the equality in (\ref{equation-cheb-LemIneq06}) and $\cos\tilde{\eta} \geq 0$, we can obtain
\begin{align*}
1+|1-\tilde{z}^2|-|\tilde{z}|^2
\leq 2\sqrt{|1-\tilde{z}^2|} \, \cos\tilde{\eta},
\end{align*}
which is just the first inequality of (\ref{equation-cheb-LemIneq05}).

From $\cos\tilde{\eta} \geq 0$ and the second inequality in (\ref{equation-cheb-LemIneq07}), we have
\begin{align*}
1+|1-\tilde{z}^2|+2\sqrt{|1-\tilde{z}^2|} \, \cos\tilde{\eta}
\geq 1+|1-\tilde{z}^2|
\geq 1+|\tilde{z}|^2-1
=|\tilde{z}|^2,
\end{align*}
which deduces the validity of the second inequality of (\ref{equation-cheb-LemIneq05}).

If $|\tilde{z}|<1$, it holds that $\left|1+\sqrt{1-|\tilde{z}|^2}\right|
=1+\sqrt{1-|\tilde{z}|^2}$. Therefore, from the equalities (\ref{equation-cheb-LemIneq02}) and (\ref{equation-cheb-LemIneq03}), we know that to obtain the inequalities in (\ref{equation-cheb-LemIneq01}), we just need to prove
\begin{align*}
\sqrt{1+|1-\tilde{z}^2|-2\sqrt{|1-\tilde{z}^2|} \, \cos\tilde{\eta}}
&\leq 1+\sqrt{1-|\tilde{z}|^2} \\
&\leq \sqrt{1+|1-\tilde{z}^2|+2\sqrt{|1-\tilde{z}^2|} \, \cos\tilde{\eta}},
\end{align*}
that is,
\begin{align}
\label{equation-cheb-LemIneq09}
|1-\tilde{z}^2|-2\sqrt{|1-\tilde{z}^2|} \, \cos\tilde{\eta}
&\leq 1-|\tilde{z}|^2+2\sqrt{1-|\tilde{z}|^2} \notag\\
&\leq |1-\tilde{z}^2|+2\sqrt{|1-\tilde{z}^2|} \, \cos\tilde{\eta}.
\end{align}

Using the condition $|\tilde{z}|<1$, and the equalities (\ref{equation-cheb-LemIneq06}) and (\ref{equation-cheb-LemIneq08}), we have
\begin{align*}
|1-\tilde{z}^2|-1+|\tilde{z}|^2
& \leq |1-\tilde{z}^2|+1-|\tilde{z}|^2 \\
& \leq \sqrt{2(|1-\tilde{z}^2|+1-|\tilde{z}|^2)} \\
& \leq \sqrt{2(|1-\tilde{z}^2|+1-|\tilde{z}|^2\cos2\theta)}
+2\sqrt{1-|\tilde{z}|^2} \\
& = 2\sqrt{|1-\tilde{z}^2|} \, \cos\tilde{\eta}
+2\sqrt{1-|\tilde{z}|^2},
\end{align*}
which is just the first inequality of (\ref{equation-cheb-LemIneq09}).

Since
\begin{align}
\label{equation-cheb-LemIneq10}
|1-\tilde{z}^2|
\geq 1-|\tilde{z}^2|
= 1-|\tilde{z}|^2,
\end{align}
from equality (\ref{equation-cheb-LemIneq06}), we have
\begin{align*}
|1-\tilde{z}^2|\cos^2\tilde{\eta}
&\geq \frac{|1-\tilde{z}^2|}{2}
+\frac{1-|\tilde{z}|^2}{2}\geq 1-|\tilde{z}|^2.
\end{align*}
Then, it follows from $\cos\tilde{\eta}\geq 0$ and $|\tilde{z}|<1$ that
\begin{align*}
\sqrt{|1-\tilde{z}^2|} \, \cos\tilde{\eta}
\geq \sqrt{1-|\tilde{z}|^2}.
\end{align*}
Combining this inequality with inequality (\ref{equation-cheb-LemIneq10}), we can easily obtain the second inequality of (\ref{equation-cheb-LemIneq09}). As a result, we have now already proved item ($i$).

With the validity of item ($i$), we can easily demonstrate the validity of item ($ii$). As if the real and imaginary parts of $z$ satisfy $\Re(z)<0$, or satisfy $\Re(z)=0$ and $\Im(z) \leq 0$, by replacing $z$ by $-z$ in the inequalities in item ($i$) we can obtain inequalities of item ($ii$). Specifically, it holds that
\begin{align*}
\left|-z-\sqrt{(-z)^2-1}\right|
 \leq \left||-z|+\sqrt{|-z|^2-1}\right|
 \leq \left|-z+\sqrt{(-z)^2-1}\right|
\leq |-z|+\sqrt{|-z|^2+1},
\end{align*}
which are just the inequalities in item ($ii$).
\end{proof}



\end{document}